\documentclass[12pt]{article}
\usepackage[a4paper,margin=1in]{geometry}
\usepackage{setspace}
\usepackage{makecell}
\usepackage[demo]{graphicx}
\usepackage{blindtext}
\usepackage{float}
\usepackage{titlesec}
\usepackage{booktabs}
\usepackage{enumerate,enumitem}
\usepackage{amsmath,amsthm,amssymb}
\usepackage{lipsum,comment,multirow}
\usepackage{tikz}
\usepackage{caption}
\usepackage{subcaption}
\usepackage{multirow}
\usepackage{float}
\usepackage[most]{tcolorbox}
\usepackage[
backend=biber,
style=numeric,
sorting=ynt
]{biblatex}
\usepackage{hyperref}
\hypersetup{
    colorlinks=true,
    linkcolor=blue,
    filecolor=magent,  
    citecolor=red,
}
\usepackage{latexsym,booktabs}
\usepackage{graphicx}
\graphicspath{ {figures/} }
\usepackage{algorithm}
\usepackage{algpseudocode}
\usepackage{adjustbox}
\usepackage{xcolor}
\usepackage{listings}
\newtheorem{theorem}{Theorem}[section]
\newtheorem{lemma}[theorem]{Lemma}

\newtheorem{corollary}[theorem]{Corollary}
\newtheorem{proposition}{Proposition}
\newtheorem{example}[theorem]{Example}

\def\<{\langle}
\def\>{\rangle} 
\usepackage{tikz}
\usetikzlibrary{arrows.meta, positioning}

\title{SDP Approach to Quadratic Vertex-Disjoint Paths Problem\thanks{Research supported by the Air Force Office of Scientific Research under award number FA9550-23-1-0508.}}
\author{Mingming Xu\footnote{School of Mathematical and Statistical Sciences, Clemson University, Clemson, SC,  USA. \url{mingmix@g.clemson.edu}}, Hao Hu\footnote{School of Mathematical and Statistical Sciences, Clemson University, Clemson, SC,  USA. \url{hhu2@clemson.edu}}}
 
\date{\today}
\begin{document}

\maketitle

\begin{abstract}
We study the quadratic $k$-vertex-disjoint paths problem (Q-$k$-VDP), which seeks $k$ vertex-disjoint paths in a directed graph that minimize a nonconvex quadratic objective function. We formulate the problem as a binary quadratic program and apply a systematic graph reduction to manage its dimensionality. To obtain a tractable bounding model, we drop the subtour-elimination constraints and derive a semidefinite programming (SDP) relaxation. We then solve this relaxed model within a branch-and-bound framework, where the bounds are computed from the SDP relaxation using a tailored alternating direction method of multipliers. Computational results show that our proposed method consistently outperforms Gurobi by solving more instances to optimality, especially on challenging large-scale instances.
\end{abstract}

\section{Introduction}\label{sec1}
The \emph{$k$-vertex-disjoint paths problem} ($k$-VDP) is a fundamental problem in graph theory and combinatorial optimization, with numerous practical applications in network design and routing. Formally, given a directed graph $\mathcal{G} = (\mathcal{N}, \mathcal{E})$ and a set of $k$ distinct source--target pairs $(s_1, t_1), \dots, (s_k, t_k)$, the goal is to determine whether there exist $k$ paths such that each path connects $s_i$ to $t_i$ and the paths are pairwise vertex-disjoint.

This problem arises when multiple connections must be established in limited space while remaining disjoint. It is critical in VLSI design, layout, and routing \cite{aggarwal2000node, behjat2005fast, behjat2006integer,held2011combinatorial, wu2011grip}, where physical wires must connect components on a chip without overlapping. It also appears in communication networks \cite{ogier2002distributed,AnandModiano2005}, such as establishing separate communication channels to ensure that if one path fails, the others remain active. 

If $k$ is part of the input, the $k$-VDP is NP-hard for both undirected and directed graphs \cite{karp1975computational}. For undirected graphs, the problem can be solved in polynomial time when $k=2$ \cite{seymour1980disjoint,shiloach1980polynomial,ohtsuki1981two,thomassen19802}, and more generally, when $k$ is a fixed constant \cite{robertson1995graph,kawarabayashi2012disjoint}. For directed graphs, the problem is significantly harder: it remains NP-hard even for $k=2$ \cite{fortune1980directed}. However, if the directed graph is planar, the $k$-VDP can be solved in polynomial time for any fixed $k$ \cite{schrijver1994finding}.

In many practical applications, it is natural to seek the cheapest routes, which leads to the \emph{shortest $k$-vertex-disjoint paths problem}. When $k$ is part of the input, this problem is NP-hard for both undirected and directed graphs \cite{berczi2017directed}. For undirected graphs, when $k=2$, a polynomial-time algorithm was initially developed for graphs with strictly positive edge lengths \cite{eilam1998disjoint}, and later extended to allow for nonnegative edge lengths \cite{kobayashi2019two}. Building on these foundations, it was eventually shown that the problem can be solved in polynomial time on undirected graphs for any fixed $k$ \cite{lochet2021polynomial}. For directed graphs, the problem presents different challenges. Finding two shortest vertex-disjoint paths is solvable in polynomial time if all arc lengths are positive and each directed cycle has a positive length \cite{berczi2017directed}. In the specific case of planar directed graphs, the problem becomes polynomially solvable for fixed $k$ \cite{berczi2017directed}.

In this paper, we address a quadratic variant of the $k$-VDP, which involves finding $k$ vertex-disjoint paths in a directed graph that minimize a quadratic cost. Such a cost structure arises naturally in applications where interactions between paths are significant. For instance, two paths may incur additional costs when competing for nearby resources, which can lead to congestion, interference, or operational conflicts. Conversely, they may yield cost reductions when routed compatibly through shared infrastructure. In VLSI global routing, for example, coupling between nearby wires increases routing costs, and excessive bends are penalized due to reliability concerns \cite{weste1993principles,rabaey2002digital}. Because these interactions can be either penalizing or beneficial, we focus on the nonconvex setting in which the quadratic term is generally indefinite. We formally define this variant as the \emph{quadratic $k$-vertex-disjoint paths problem} (Q-$k$-VDP).

Our main contributions are as follows. We formulate the Q-$k$-VDP as a binary quadratic program (BQP). For the planar case, we also develop a polynomial-time graph reduction procedure that significantly reduces the size of this formulation. Since the formulation contains exponentially many subtour-elimination constraints, we drop these constraints to obtain a computationally manageable relaxation, termed the subtour-relaxed BQP. This subtour-relaxed BQP still yields useful bounds for the original BQP. However, it remains difficult to solve using general-purpose solvers.

Therefore, we derive a semidefinite programming (SDP) relaxation of the subtour-relaxed BQP and apply facial reduction to reduce its dimension. Subsequently, we develop a tailored alternating direction method of multipliers (ADMM) algorithm for this relaxation and embed it within a branch-and-bound (B\&B) framework to solve the subtour-relaxed BQP to optimality.

We benchmark the performance of our approach against Gurobi, as this commercial solver natively supports mixed-integer quadratic programs and performs strongly in practice \cite{gurobi}. Experimental results show that for larger instances, our approach consistently achieves tighter optimality gaps and solves significantly more instances to optimality than Gurobi within a one-hour time limit.
 
The remainder of this paper is organized as follows. Section \ref{sec:background} reviews background on graph theory and SDP. Section \ref{sec:bqp} presents the BQP based on a multi-commodity flow representation. Section \ref{sec:dedu} introduces dimension-reduction techniques for the BQP by identifying and removing fixed arcs. Section \ref{sec:sdp_relaxation} derives the SDP relaxation of the subtour-relaxed BQP and applies facial reduction. Section \ref{sec:admm} describes the proposed ADMM algorithm. Finally, Section \ref{sec:experiments} presents the benchmark instances and computational results.

\section{Background}\label{sec:background}
In this section, we introduce the graph notation and definitions used throughout the paper. We also review basic notation for SDP and briefly summarize the essentials of facial reduction.
\subsection{Graphs}
Let $\mathcal{G}=(\mathcal{N},\mathcal{E})$ be a directed graph. An arc $e=(u,v) \in \mathcal{E}$ is outgoing from $u$ and incoming to $v$. Here, $u$ is the tail of the arc and $v$ is the head. For $v \in \mathcal{N}$, we denote its sets of outgoing, incoming, and incident arcs by $\delta_{+}(v)$, $\delta_{-}(v)$, and $\delta(v)$, respectively. For a subset of vertices $S \subseteq \mathcal{N}$, let $\mathcal{E}(S)$ be the set of arcs in $\mathcal{G}$ whose endpoints are both in $S$. A sequence of vertices $v_0, \dots, v_l$ such that $(v_{i-1}, v_i) \in \mathcal{E}$ for each $i$ is a \textit{walk}. It is a \textit{path} if all vertices are distinct, and a \textit{cycle} if $v_0 = v_l$ and all other vertices are distinct.

Let $\mathcal{G}_1, \dots, \mathcal{G}_k$ be a collection of graphs, where each shares no common vertices or arcs with any other $\mathcal{G}_i$. Their disjoint union $\bigsqcup_{i=1}^k \mathcal{G}_i$ is defined by the union of their respective vertex and arc sets:
\[
\bigsqcup_{i=1}^k \mathcal{G}_i := \left( \bigsqcup_{i=1}^k \mathcal{N}_i, \bigsqcup_{i=1}^k \mathcal{E}_i \right).
\]
If all $\mathcal{G}_i$ are isomorphic to a graph $\mathcal{G}$, their disjoint union is denoted by $k\mathcal{G}$.
\subsection{Semidefinite Programming}

Let $\mathbb{S}^n$ be the vector space of $n \times n$ real symmetric matrices, and $\mathbb{S}^n_+ = \{X \in \mathbb{S}^n : X \succeq 0\}$ be the positive semidefinite cone. We use the trace inner product $\langle U, V \rangle = \operatorname{tr}(UV)$ for $U, V \in \mathbb{S}^n$ and the standard Euclidean inner product $\langle x,y\rangle = x^\top y$ for $x, y \in \mathbb{R}^m$. Given $C, A_1, \dots, A_m \in \mathbb{S}^n$ and $b \in \mathbb{R}^m$, define the linear map $\mathbf{A} : \mathbb{S}^n \to \mathbb{R}^m$ by $\mathbf{A}(X) := (\langle A_1, X \rangle, \dots, \langle A_m, X \rangle)^\top$ and its adjoint $\mathbf{A}^*(y) := \sum_{i=1}^m y_i A_i$. The standard primal \textbf{(P)} and dual \textbf{(D)} SDP pair is:
\begin{equation*}
\begin{aligned}
\textbf{(P)} \quad \min \quad & \langle C, X \rangle \quad & \textbf{(D)} \quad \max \quad & \langle b, y \rangle \\
\text{s.t.} \quad & \mathbf{A}(X) = b, & \text{s.t.} \quad & \mathbf{A}^*(y) + S = C, \\
& X \in \mathbb{S}^n_+. & & S \in \mathbb{S}^n_+.
\end{aligned}
\end{equation*}
For a feasible \textbf{(P)}, Slater's condition holds if there exists $X \in \operatorname{int}(\mathbb{S}^n_+)$ such that $\mathbf{A}(X) = b$. Otherwise, the feasible set lies in a proper face of $\mathbb{S}^n_+$, identifiable via \cite{drusvyatskiy2017many}: 
\begin{theorem}[Theorem of the alternative]
Either \textbf{(P)} is strictly feasible, or the auxiliary system $0 \neq \mathbf{A}^*(y) \succeq 0$ with $\langle b,y\rangle \le 0$ is feasible. If \textbf{(P)} is feasible, this auxiliary system requires $\langle b,y\rangle = 0$. In this case, the primal feasible set is contained within the proper face:
\[
\mathcal{F} = (\mathbf{A}^*(y))^\perp \cap \mathbb{S}^n_+ = \{X \in \mathbb{S}^n_+ : \langle \mathbf{A}^*(y), X \rangle = 0\}.
\] 
The vector $\mathbf{A}^*(y)$ is then called an exposing vector of $\mathcal{F}$.
\end{theorem}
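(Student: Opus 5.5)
The plan is to prove the theorem of the alternative by separating the affine feasible set from the interior of the semidefinite cone. After that, the refinements for the case where \textbf{(P)} is feasible follow from simple trace-inner-product arguments.

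\textbf{Step 1 (at most one alternative holds).} Suppose $X \succ 0$ satisfies $\mathbf{A}(X)=b$, and suppose $y$ satisfies $0\neq \mathbf{A}^*(y)\succeq 0$. Then $\langle b,y\rangle = \langle \mathbf{A}(X),y\rangle = \langle X,\mathbf{A}^*(y)\rangle$. The inner product of a positive definite matrix with a nonzero positive semidefinite matrix is strictly positive. This can be seen by writing $\mathbf{A}^*(y)=\sum_i \lambda_i u_iu_i^\top$ with some $\lambda_i>0$, which gives $\langle X,\mathbf{A}^*(y)\rangle=\sum_i\lambda_i u_i^\top X u_i>0$. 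This contradicts $\langle b,y\rangle\le 0$, so the two systems cannot both be feasible.

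\textbf{Step 2 (at least one alternative holds).} Assume \textbf{(P)} is not strictly feasible. Then the affine subspace $L=\{X\in\mathbb{S}^n:\mathbf{A}(X)=b\}$ and the open convex cone $\operatorname{int}(\mathbb{S}^n_+)$ are disjoint nonempty convex sets. By the separating hyperplane theorem there exist $W\neq 0$ and $\alpha$ with $\langle W,X\rangle\le\alpha$ on $L$ and $\langle W,X\rangle\ge\alpha$ on $\operatorname{int}(\mathbb{S}^n_+)$.

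The key deductions are as follows. A linear functional bounded above on an affine subspace must be constant there, so $W\in(\ker\mathbf{A})^\perp=\operatorname{range}\mathbf{A}^*$. Hence $W=\mathbf{A}^*(y)$ for some $y$, and on $L$ we have $\langle W,X\rangle=\langle b,y\rangle$. If $L=\emptyset$, so that \textbf{(P)} is infeasible, we instead separate the convex set $\mathbf{A}(\operatorname{int}\mathbb{S}^n_+)$ from the point $b$ in $\mathbb{R}^m$. Whether $b$ lies outside $\operatorname{range}\mathbf{A}$ or inside it, this produces $y$ with the required properties, and the same conclusions then follow.

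Next, boundedness below on a cone forces $\alpha\le 0$ and $\langle W,X\rangle\ge 0$ for all $X\succ 0$. By closure this gives $W\in(\mathbb{S}^n_+)^*=\mathbb{S}^n_+$, using the self-duality of $\mathbb{S}^n_+$. Combining the two pieces, $\langle b,y\rangle\le\alpha\le 0$, and $0\neq\mathbf{A}^*(y)\succeq 0$. I expect this step to be the main obstacle. Extracting $W\in\operatorname{range}\mathbf{A}^*$ and handling the possibly-infeasible case both require care. I would first reduce to a feasible \textbf{(P)}, since that is the case the theorem actually uses later in the paper, and treat infeasibility by the image-separation variant described above.

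\textbf{Step 3 (the feasible case).} If \textbf{(P)} is feasible, take any feasible $X\succeq0$. Then $\langle b,y\rangle=\langle X,\mathbf{A}^*(y)\rangle\ge 0$, because the inner product of two positive semidefinite matrices is nonnegative. Together with $\langle b,y\rangle\le 0$ this gives $\langle b,y\rangle=0$, and hence $\langle X,\mathbf{A}^*(y)\rangle=0$ for every feasible $X$. This shows the feasible set lies in $\mathcal{F}=(\mathbf{A}^*(y))^\perp\cap\mathbb{S}^n_+$.

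It remains to check that $\mathcal{F}$ is a proper face. It is a face because, for $Z=\mathbf{A}^*(y)\succeq0$, the functional $X\mapsto\langle Z,X\rangle$ is nonnegative on $\mathbb{S}^n_+$, so its zero set is an exposed face. It is proper because $I\notin\mathcal{F}$, as $\operatorname{tr}Z>0$ for $Z\neq 0$. Finally, $\langle Z,X\rangle=0$ with $X,Z\succeq 0$ is equivalent to $ZX=0$. This is what makes the face computable as $V\mathbb{S}^r_+V^\top$, with the columns of $V$ spanning $\ker Z$, which is the form used later for facial reduction.
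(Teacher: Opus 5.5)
\textbf{The gap: your $L=\emptyset$ branch claims a case that is false.}

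The paper gives no proof of this theorem. It quotes the result from Drusvyatskiy--Wolkowicz, so your argument has to stand on its own. Steps 1 and 3 are correct. So is the main separation argument in Step 2, provided $L=\{X:\mathbf{A}(X)=b\}$ is nonempty:
\begin{itemize}
\item boundedness above on $L$ gives $W\in(\ker\mathbf{A})^\perp=\operatorname{range}\mathbf{A}^*$;
\item scaling on the open cone gives $W\succeq 0$ and $\alpha\le 0$;
\item $W\neq 0$ gives $\mathbf{A}^*(y)\neq 0$.
\end{itemize}
This argument already covers the infeasible case $L\neq\emptyset$, $L\cap\mathbb{S}^n_+=\emptyset$. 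The only remaining case is therefore $L=\emptyset$, which means $b\notin\operatorname{range}\mathbf{A}$. Your phrase ``whether $b$ lies outside $\operatorname{range}\mathbf{A}$ or inside it'' is moot there: $b$ must lie outside.

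That branch is where your proof breaks, and it cannot be repaired. Separating $b$ from $\mathbf{A}(\operatorname{int}\mathbb{S}^n_+)$ yields $y\neq 0$ with $\mathbf{A}^*(y)\succeq 0$ and $\langle b,y\rangle\le 0$. However, $y\neq 0$ does not imply $\mathbf{A}^*(y)\neq 0$. Since $b\notin\operatorname{range}\mathbf{A}$ forces $\ker\mathbf{A}^*=(\operatorname{range}\mathbf{A})^\perp\neq\{0\}$, any $y\in\ker\mathbf{A}^*$ with $\langle b,y\rangle<0$ is a valid separator, and such a $y$ has $\mathbf{A}^*(y)=0$.

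In fact the dichotomy is false in this case:
\begin{itemize}
\item Take $n=m=1$, $A_1=0$, $b=1$. Then (P) is infeasible, so it is not strictly feasible, yet $\mathbf{A}^*(y)=0$ for every $y$.
\item A less trivial instance is $n=2$, $A_1=\begin{bmatrix}0&1\\1&0\end{bmatrix}$, $A_2=0$, $b=(0,1)^\top$. Here $\mathbf{A}^*(y)=y_1A_1\succeq 0$ forces $y_1=0$.
\end{itemize}
So the first sentence of the theorem needs the hypothesis $b\in\operatorname{range}\mathbf{A}$. This holds automatically under the paper's own framing (``for a feasible (P)'') or under the surjectivity of $\mathbf{A}$ that is standard in the cited source. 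State that hypothesis and drop the $L=\emptyset$ branch; your proof is then complete. As written, it claims to prove a case that is false.
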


\section{Binary Quadratic Program}\label{sec:bqp}
The Q-$k$-VDP problem can be viewed as a special case of a quadratic multi-commodity flow problem with unit-capacity vertex constraints. To formulate it, we begin by constructing $k$ isomorphic copies of the base graph $\mathcal{G}$, denoted as $\mathcal{G}_i := (\mathcal{N}_i, \mathcal{E}_i)$ for $i=1,\dots,k$. The vertex and arc sets of each copy $\mathcal{G}_i$ are defined as $\mathcal{N}_i := \{v^i : v \in \mathcal{N}\}$ and $\mathcal{E}_i := \{(u^i, v^i) : (u,v) \in \mathcal{E}\}$, where $v^i$ denotes the $i$-th copy of vertex $v$. Thus, the Q-$k$-VDP is equivalent to finding a path in each $\mathcal{G}_i$ connecting the respective copies of $s_i$ and $t_i$ such that the total quadratic cost is minimized and, for any vertex $v \in \mathcal{N}$, at most one of its $k$ copies is visited.

To represent these $k$ paths within a single framework, we consider the disjoint union graph $k\mathcal{G} := \bigsqcup_{i=1}^k \mathcal{G}_i$, which comprises $m = k|\mathcal{N}|$ vertices and $n = k|\mathcal{E}|$ arcs. Let $A \in \mathbb{R}^{m \times n}$ be the vertex--arc incidence matrix of the graph $k\mathcal{G}$, and define the supply--demand vector $b \in \mathbb{R}^{m}$ such that for each $i \in \{1, \dots, k\}$, the entry $b_{v^i}$ corresponding to vertex $v^i \in \mathcal{N}_i$ is $1$ if $v = s_i$, $-1$ if $v = t_i$, and $0$ otherwise.

Let $x \in \{0,1\}^n$ be the binary decision vector where $x_e = 1$ if arc $e$ in $k\mathcal{G}$ is selected, and $0$ otherwise. The Q-$k$-VDP is formulated as the following binary quadratic program (BQP):
\begin{subequations}\label{eq:BQP-form}
\begin{align}
\min_{x} \quad & x^\top \tilde{Q} x \label{BQP-obj}\\
\text{s.t.} \quad 
& Ax = b, \label{BQP-flow}\\
& \sum_{i=1}^k \sum_{e \in \delta_{+}(v^i)} x_e \leq 1, \quad \forall v \in \mathcal{N}, \label{BQP-out}\\
& \sum_{i=1}^k \sum_{e \in \delta_{-}(v^i)} x_e \leq 1, \quad \forall v \in \mathcal{N}, \label{BQP-in}\\
& \sum_{e \in \mathcal{E}_i(S)} x_e \leq |S| - 1, \quad \forall i \in \{1, \dots, k\}, \; \forall S \subseteq \mathcal{N}_i, |S| \geq 2, \label{BQP-sec}\\
& x \in \{0,1\}^n, \label{BQP-bin}
\end{align}
\end{subequations}
where $\tilde{Q} \in \mathbb{S}^n$ is the symmetric indefinite cost matrix capturing both individual arc costs and pairwise interactions. Constraint \eqref{BQP-flow} ensures flow conservation from sources to targets. Constraints \eqref{BQP-out} and \eqref{BQP-in} guarantee vertex-disjointness. Finally, the subtour-elimination constraints (SECs)\eqref{BQP-sec} are imposed to exclude cycles disconnected from the source--target paths.
 
Building upon the BQP, we will later derive an SDP relaxation. To ensure this relaxation is sufficiently tight, we identify a conflict set $\mathcal{C}$ consisting of arc pairs $(p,q)$ that are mutually exclusive:
\begin{equation*}
\mathcal{C} := \left\{ (p,q) : 
\begin{aligned}
    & \forall v \in \mathcal{N}, i \neq j \text{ s.t. } p \in \delta(v^i), q \in \delta(v^j), \quad \text{or} \\
    & \forall v \in \mathcal{N}, \forall i, p \neq q \text{ s.t. } \{p, q\} \subseteq \delta_{+}(v^i) \text{ or } \{p, q\} \subseteq \delta_{-}(v^i)
\end{aligned} \right\}.
\end{equation*}

Here, the first condition ensures vertex-disjointness across different paths, while
the second enforces that, within the disjoint union graph $k\mathcal{G}$, at most one incoming arc and at most one outgoing arc can be active at a vertex.
Although these relations are implicitly satisfied by the linear constraints in \eqref{eq:BQP-form}, explicitly identifying this conflict set is essential for strengthening the subsequent SDP relaxation.

\section{Disjoint Union Graph Reduction}\label{sec:dedu}
 For large $k$, the size of $k\mathcal{G}$ imposes a high-dimensional and computationally demanding decision space on the BQP. This section demonstrates that it is possible to simplify $k\mathcal{G}$. Due to the vertex-disjointness constraints, certain arcs in $k\mathcal{G}$ must appear in every feasible solution of the BQP, while others cannot appear in any feasible solution. By identifying and removing these fixed arcs, we obtain a reduced graph. We then formulate a reduced binary quadratic program (R-BQP) on this reduced graph, which is equivalent to the original BQP while operating in a significantly lower-dimensional space.
 
Since arc fixity depends only on feasibility in the Q-$k$-VDP and not on the quadratic objective, it can be determined by analyzing the corresponding classical $k$-VDP. For fixed $k$, the $k$-VDP is solvable in polynomial time on planar graphs \cite{schrijver1994finding}. Leveraging this result, we introduce a polynomial-time reduction procedure for planar graphs to systematically identify and remove fixed arcs.

\subsection{Fixed-Zero Arcs}

We begin by identifying arcs that cannot appear in any feasible solution to \eqref{eq:BQP-form}. Formally, we define an arc $e=(u^i,v^i)$ in $k\mathcal{G}$ as a fixed-zero arc if $x_e = 0$ for every feasible solution $x$ to \eqref{eq:BQP-form}. To determine if $e$ is a fixed-zero arc, we must check if the underlying arc $(u,v)$ in the base graph $\mathcal{G}$ is ever used by a path from $s_i$ to $t_i$. For this purpose, we construct an auxiliary $(k+1)$-VDP problem instance on $\mathcal{G}$ with the following source--target pairs:
$$(s_1, t_1), \ldots, (s_{i-1}, t_{i-1}), (s_i, u), (v, t_i), (s_{i+1}, t_{i+1}), \ldots, (s_k, t_k).$$

\begin{lemma}
The arc $e = (u^i, v^i)$ in $k\mathcal{G}$ is a fixed-zero arc if and only if the auxiliary $(k+1)$-VDP on $\mathcal{G}$ is infeasible.
\end{lemma}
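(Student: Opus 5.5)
We prove the contrapositive in both directions: $e$ is \emph{not} fixed-zero if and only if the auxiliary $(k+1)$-VDP is feasible. The argument rests on one structural fact. Every feasible $x$ of \eqref{eq:BQP-form} decomposes, in each copy $\mathcal{G}_i$, into a single $s_i^i$--$t_i^i$ path. Flow conservation \eqref{BQP-flow} together with the degree bounds \eqref{BQP-out}--\eqref{BQP-in} forces the support of $x$ restricted to $\mathcal{E}_i$ to be a vertex-disjoint union of one $s_i^i$--$t_i^i$ path and possibly some cycles. The SECs \eqref{BQP-sec} then exclude the cycles: a cycle on vertex set $S$ uses $|S|$ arcs of $\mathcal{E}_i(S)$. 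Conversely, $k$ vertex-disjoint $s_i$--$t_i$ paths in $\mathcal{G}$ lifted to their copies give a feasible $x$. Here a path uses at most one out-arc and one in-arc per vertex, disjointness gives \eqref{BQP-out}--\eqref{BQP-in} across copies, and an acyclic path support satisfies every SEC because any subgraph of a path on $S$ has at most $|S|-1$ arcs. We assume implicitly the standard convention that $s_i\neq t_i$ and that the terminals are distinct.

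For ($\Leftarrow$, contrapositive): suppose some feasible $x$ has $x_e=1$. By the decomposition, the $i$-th path $P_i$ in $\mathcal{G}_i$ contains the arc $(u^i,v^i)$. Splitting $P_i$ at this arc yields an $s_i$--$u$ subpath and a $v$--$t_i$ subpath in $\mathcal{G}$. These are vertex-disjoint from each other because $P_i$ is a path. They are also vertex-disjoint from the projections of the other paths $P_j$, since \eqref{BQP-out}--\eqref{BQP-in} forbid two copies of one base vertex from being used. (A vertex $w$ touched by paths in copies $i\ne j$ would carry an incident arc in both copies, and then either the in- or the out-inequality at $w$ fails. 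Endpoints need care: a source has an out-arc, a target has an in-arc, so this must be checked case by case using $b$.) These $k+1$ paths solve the auxiliary instance.

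For ($\Rightarrow$, contrapositive): given a solution $Q_1,\dots,Q_{k+1}$ of the auxiliary instance, concatenate the $s_i$--$u$ path, the arc $(u,v)$, and the $v$--$t_i$ path. The result is an $s_i$--$t_i$ path, since its two pieces are vertex-disjoint, and it is disjoint from the other $k-1$ paths. Lifting to $k\mathcal{G}$ gives a feasible $x$ with $x_e=1$, so $e$ is not fixed-zero.

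The main obstacle is making the decomposition rigorous, specifically the claim that \eqref{BQP-out}--\eqref{BQP-in} enforce vertex-disjointness across copies at terminal vertices. Consider a vertex $w$ that is the target $t_j$ of copy $j$ and an interior vertex of path $i$. Copy $j$ contributes an in-arc at $w$, and copy $i$ contributes both an in-arc and an out-arc, so the in-constraint at $w$ is violated; the symmetric case for sources uses the out-constraint. The degenerate case $u=s_i$ (or $v=t_i$), where a trivial path is needed, should be handled by allowing single-vertex paths in the auxiliary instance.
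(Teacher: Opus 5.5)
Your proposal is correct and follows the same route as the paper: split the $i$-th path at $(u,v)$ to get $k+1$ disjoint paths, and conversely concatenate the $s_i$--$u$ and $v$--$t_i$ paths through $(u,v)$ and lift the result to $k\mathcal{G}$. Your extra steps are ones the paper asserts without proof: decomposing a feasible $x$ into paths via the degree bounds and the SECs, and checking that \eqref{BQP-out}--\eqref{BQP-in} force cross-copy disjointness at terminals. Your distinct-terminals assumption is genuinely needed for that check, since with $t_j=s_i$ these two constraints would let copies $i$ and $j$ share that vertex.
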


\begin{proof}
We prove the equivalent statement: there exists a feasible solution $x$ to \eqref{eq:BQP-form} with $x_e = 1$ if and only if the auxiliary $(k+1)$-VDP is feasible.

First, suppose $x$ is feasible for \eqref{eq:BQP-form} and $x_e = 1$. Then some $s_{i}$--$t_{i}$ path uses the arc $(u, v)$. Removing $(u, v)$ splits this path into a subpath from $s_i$ to $u$ and a subpath from $v$ to $t_i$. These two subpaths, along with the other $k-1$ paths, are mutually vertex-disjoint and thus form a feasible solution to the auxiliary $(k+1)$-VDP.

Conversely, suppose the auxiliary $(k+1)$-VDP is feasible, so it has $k+1$ mutually disjoint paths. Since $(u,v)$ is an arc in $\mathcal{G}$, we can concatenate the path from $s_{i}$ to $u$ and the path from $v$ to $t_{i}$ into a single path from $s_{i}$ to $t_{i}$. Together with the remaining $k-1$ paths, this constitutes a feasible set of $k$ vertex-disjoint paths for  all $k$ source--target pairs $(s_{i},t_{i})$ for $i =1,\ldots,k$. This yields a feasible solution $x$ to \eqref{eq:BQP-form} where $x_e = 1$.
\end{proof}

\subsection{Fixed-One Arcs}
Next, we identify arcs that must be included in every feasible solution to \eqref{eq:BQP-form}. Formally, we define an arc $e=(u^i,v^i)$ in $k\mathcal{G}$ as a \emph{fixed-one} arc if $x_e = 1$ for every feasible solution $x$ to \eqref{eq:BQP-form}. We identify these arcs by performing a deletion test on the base graph $\mathcal{G}$, incorporating the previously identified fixed-zero arcs.
Let $\tilde{\mathcal{G}}$ be the graph obtained by removing arc $(u,v)$ from $\mathcal{G}$. We then construct an auxiliary $k$-VDP problem instance on $\tilde{\mathcal{G}}$ with the following source--target pairs:
$$(s_1, t_1), \ldots, (s_{i-1}, t_{i-1}), (s_i,t_i), (s_{i+1}, t_{i+1}), \ldots, (s_k, t_k).$$
 
\begin{lemma}
An arc $e = (u^i, v^i)$ in $k\mathcal{G}$ is a fixed-one arc if and only if:
\begin{enumerate}[label=(\alph*)]
    \item The auxiliary $k$-VDP on $\tilde{\mathcal{G}}$ is infeasible;
    \item The arc $(u^j, v^j)$ is a fixed-zero arc in $k\mathcal{G}$ for all $j \neq i$.
\end{enumerate}
\end{lemma}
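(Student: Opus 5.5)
The plan is to argue both directions directly from the definitions, using the correspondence between feasible solutions of \eqref{eq:BQP-form} and families of $k$ vertex-disjoint paths in $\mathcal{G}$. I would first make this correspondence explicit, as it was used implicitly in the proof of the fixed-zero lemma.

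\textbf{Correspondence.} Given vertex-disjoint $s_i$--$t_i$ paths $P_1,\dots,P_k$ in $\mathcal{G}$, place $P_j$ in the copy $\mathcal{G}_j$ and take the incidence vector $x$. This $x$ satisfies \eqref{BQP-flow}--\eqref{BQP-bin}.

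Conversely, let $x$ be feasible. In each copy $\mathcal{G}_j$, the support of $x$ has in- and out-degree at most one at every vertex by \eqref{BQP-out}--\eqref{BQP-in}. Together with flow conservation \eqref{BQP-flow}, the support is therefore one $s_j$--$t_j$ path plus possibly some disjoint cycles. The SECs \eqref{BQP-sec} exclude the cycles. Constraints \eqref{BQP-out}--\eqref{BQP-in} then force the projected paths to be vertex-disjoint in $\mathcal{G}$: an intermediate vertex has both an in- and an out-arc, and endpoints are covered by the $b$-entries together with the degree constraints.

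I expect this bookkeeping to be the main obstacle. In particular, the argument must check that a vertex used as an endpoint of one path cannot be an interior vertex of another. I would handle this by noting that a vertex $v$ used by path $j$ contributes an arc in $\delta_+(v^j)$ or $\delta_-(v^j)$, and checking each case against \eqref{BQP-out} and \eqref{BQP-in}.

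\textbf{($\Rightarrow$).} Suppose $e$ is fixed-one.

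For (a): if the auxiliary $k$-VDP on $\tilde{\mathcal{G}}$ were feasible, its paths avoid $(u,v)$. By the correspondence they yield a feasible $x$ with $x_e=0$, which is a contradiction.

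For (b): fix $j\neq i$ and any feasible $x$. Then $x_e=1$, so path $i$ visits $u$. By vertex-disjointness, path $j$ does not visit $u$, so $x_{(u^j,v^j)}=0$. As $x$ was arbitrary, $(u^j,v^j)$ is fixed-zero. If \eqref{eq:BQP-form} is infeasible, every statement here holds vacuously.

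\textbf{($\Leftarrow$).} Assume (a) and (b), and let $x$ be any feasible solution. Its associated paths form a solution of the $k$-VDP on $\mathcal{G}$. By (a), this solution cannot live in $\tilde{\mathcal{G}}$, so some path $P_j$ uses the arc $(u,v)$, that is, $x_{(u^j,v^j)}=1$. By (b), $(u^j,v^j)$ is fixed-zero for every $j\neq i$, so $j=i$. Hence $x_e=1$, and since $x$ was arbitrary, $e$ is a fixed-one arc.
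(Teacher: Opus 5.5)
Your proof is correct and is essentially the paper's argument. You prove (a) by noting that a $k$-VDP solution in $\tilde{\mathcal{G}}$ would give a feasible $x$ with $x_e=0$, and (b) by noting that path $i$ occupies $u$. For the converse, you locate the copy that uses $(u,v)$ and exclude $j\neq i$ via (b).

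The only addition is your explicit proof of the BQP/$k$-VDP correspondence, which the paper uses tacitly. That correspondence needs the $2k$ terminals to be pairwise distinct. Constraints \eqref{BQP-out}--\eqref{BQP-in} do not stop a vertex $v=t_l=s_j$ from lying on both paths $l$ and $j$. Your endpoint check should therefore state this as an assumption rather than derive it from the $b$-entries.
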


\begin{proof}
Suppose $e = (u^i, v^i)$ is a fixed-one arc. Then every feasible solution $x$ of \eqref{eq:BQP-form} satisfies $x_e = 1$, which implies any path from $s_{i}$ to $t_{i}$ must use the arc $(u, v)$. Hence, the auxiliary $k$-VDP on $\tilde{\mathcal{G}}$ is infeasible, satisfying condition (a). Furthermore, by vertex-disjointness, if the $i$-th path uses $(u, v)$, no other path $j \neq i$ can use it. This forces $x_{(u^j, v^j)} = 0$, satisfying condition (b).

Conversely, assume (a) and (b) hold. The infeasibility of the auxiliary $k$-VDP implies that the arc $(u,v)$ must be used by at least one source--target pair to form a feasible solution. Since $(u^j, v^j)$ is a fixed-zero arc for all $j \neq i$, no other path $j \neq i$ can use $(u, v)$. The $i$-th path must then use $(u, v)$. This gives $x_e = 1$, making $e$ a fixed-one arc.
\end{proof}

\begin{corollary}\label{induced-zero}
If $e = (u^i, v^i)$ is a fixed-one arc in $k\mathcal{G}$, the vertex-disjoint constraints \eqref{BQP-out} and \eqref{BQP-in} logically induce a set of fixed-zero arcs in $k\mathcal{G}$.
\end{corollary}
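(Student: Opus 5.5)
The plan is to make the statement concrete: write down the set of arcs that is forced to zero, and then show that each of them is fixed-zero as defined in the subsection on fixed-zero arcs. Let $e=(u^i,v^i)$ be a fixed-one arc, so $x_e=1$ for every feasible $x$ of \eqref{eq:BQP-form}. The candidate set of induced fixed-zero arcs is
\[
\mathcal{Z}(e) := \Bigl(\bigcup_{j=1}^k \delta_{+}(u^j) \setminus \{e\}\Bigr) \cup \Bigl(\bigcup_{j=1}^k \delta_{-}(v^j) \setminus \{e\}\Bigr) \cup \Bigl(\bigcup_{j\neq i} \bigl(\delta(u^j)\cup\delta(v^j)\bigr)\Bigr).
\]
It consists of all other outgoing arcs of any copy of $u$, all other incoming arcs of any copy of $v$, and every arc incident to a copy of $u$ or $v$ in a layer $j\neq i$.

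The first two groups follow directly from the capacity constraints. Since $e\in\delta_{+}(u^i)$ and $x_e=1$, constraint \eqref{BQP-out} at vertex $u$ reads $\sum_{j}\sum_{f\in\delta_{+}(u^j)}x_f\le 1$. Nonnegativity and binarity then give $x_f=0$ for every $f\in\delta_{+}(u^j)$ with $f\neq e$, for every $j$. The same argument applied to \eqref{BQP-in} at $v$, using $e\in\delta_{-}(v^i)$, gives $x_f=0$ for every other incoming arc of every copy of $v$. These implications hold for every feasible $x$, so each such $f$ is fixed-zero. Equivalently, each such pair $(e,f)$ lies in the conflict set $\mathcal{C}$.

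The third group covers the incoming arcs of $u^j$ and the outgoing arcs of $v^j$ for $j\neq i$, which are not directly covered by the constraint rows containing $e$. These are the step I expect to need care. First I would argue with the flow equations \eqref{BQP-flow} restricted to layer $j$. The outgoing flow of $u^j$ is already $0$, so flow conservation gives $\sum_{\delta_{-}(u^j)}x_f = b_{u^j}$. Here $b_{u^j}\le 0$ unless $u=s_j$. But $u\neq s_j$, because $u$ lies on path $i$ and the paths are disjoint (formally, $u=s_j$ would force an outgoing arc at $u^j$, which is impossible). If $u=t_j$, the same disjointness argument excludes it, since $u^j$ would then need an incoming arc, contradicting \eqref{BQP-in}, which is saturated by the path-$i$ arc entering $u^i$. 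That last point holds provided $u\neq s_i$; when $u=s_i$ there is no such arc, and I would use instead that $s_i$ is shared with no other pair. So $b_{u^j}=0$ and the incoming flow at $u^j$ vanishes. The symmetric argument handles $v^j$.

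As a fallback that avoids the case analysis, I would invoke the first conflict condition defining $\mathcal{C}$ together with the path interpretation. A feasible $x$ decodes, by the SECs \eqref{BQP-sec}, into $k$ vertex-disjoint paths. Path $i$ visits $u$ and $v$, so no path $j\neq i$ touches them, and all arcs of $\delta(u^j)\cup\delta(v^j)$ for $j\neq i$ are zero. I would present this path-based argument as the main proof and keep the constraint-level derivation as a remark.
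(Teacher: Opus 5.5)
Your proof is correct, and your set $\mathcal{Z}(e)$ is exactly the paper's set. The first two groups are handled just as in the paper, by saturation of \eqref{BQP-out} at $u$ and \eqref{BQP-in} at $v$.

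The difference is the third group. The paper's proof simply asserts that this saturation forces every arc of $\delta(u^j)\cup\delta(v^j)$, $j\neq i$, to zero. You correctly notice that the incoming arcs of $u^j$ and the outgoing arcs of $v^j$ do not appear in the rows that $e$ saturates, so an extra step is needed, and you supply one.

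Of your two versions, make the constraint-level one the main proof, and streamline it as follows.
\begin{itemize}
\item If $u\neq s_i$, flow conservation at $u^i$ with $x_e=1$ forces $\sum_{f\in\delta_-(u^i)}x_f=1$. This also rules out $u=t_i$, which would require two incoming arcs. So \eqref{BQP-in} at $u$ is saturated, and all incoming arcs of every $u^j$, $j\neq i$, vanish at once.
\item Only when $u=s_i$ do you need the case analysis: $\sum_{f\in\delta_+(u^j)}x_f=0$ together with $b_{u^j}=0$, i.e.\ $u\neq t_j$.
\item The argument for $v$ is symmetric: use $v\neq t_i$ versus $v=t_i$.
\end{itemize}
This derivation never uses the SECs \eqref{BQP-sec}. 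The induced exclusions are therefore consequences of \eqref{BQP-flow}--\eqref{BQP-in} alone, which matches the statement's claim that these constraints ``logically induce'' the zeros. They also stay valid after the SECs are dropped, which is the setting where the paper relies on such exclusions through the conflict set $\mathcal{C}$.

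Your path-based version is shorter, but it leans on the SECs and on the paper's informal claim that \eqref{BQP-out}--\eqref{BQP-in} guarantee vertex-disjointness. That claim, like your case $u=s_i$, holds only if the $2k$ terminals are pairwise distinct. If $s_i=t_j$, path $i$ can leave that vertex and path $j$ can enter it without violating either capacity row. State the distinct-terminals assumption explicitly.

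One small slip: since $b=+1$ at sources, $Ax=b$ must read outflow minus inflow. With zero outflow at $u^j$ you therefore get $\sum_{f\in\delta_-(u^j)}x_f=-b_{u^j}$, not $b_{u^j}$. Your case analysis is unaffected.
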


\begin{proof}
Setting $x_e = 1$ saturates the unit capacity of vertices $u$ and $v$ in \eqref{BQP-out} and \eqref{BQP-in}. This saturation forces every arc incident to $u^j$ or $v^j$ to be a fixed-zero arc for all $j \neq i$. It further requires that all outgoing arcs from $u^i$ except $e$, and all incoming arcs to $v^i$ except $e$, must also be fixed-zero arcs.
\end{proof}
\subsection{Graph Reduction}
In this section, we show how identifying fixed arcs simplifies $k\mathcal{G}$ into a reduced graph, yielding an R-BQP equivalent to \eqref{eq:BQP-form}. We denote the reduced graph by $G=(V,E)$.

Let $E^0 \subset \bigsqcup_{i=1}^k \mathcal{E}_i$ and $E^1 \subset \bigsqcup_{i=1}^k \mathcal{E}_i$ denote the sets of arcs fixed to zero and one, respectively. The reduced graph $G$ is constructed by removing the arc set $E^0 \cup E^1$ from $k\mathcal{G}$, yielding a reduced set of arcs $E$ with cardinality $n' = |E|$. Let $A'$ denote the incidence matrix of $G$, and let $b' = b - \sum_{e \in E^1} A_e$ represent the updated supply--demand vector, where $A_e$ denotes the column of $A$ associated with arc $e$.

The R-BQP is then formulated on $G$ using the reduced binary vector $x' \in \{0,1\}^{n'}$. By substituting the fixed values $x_{E^1} = \mathbf{1}$ and $x_{E^0} = \mathbf{0}$ into \eqref{BQP-obj}, we derive a new quadratic objective $(x')^\top Q' x' + (c')^\top x' + \kappa$, where $Q' \in \mathbb{S}^{n'}$, $c' \in \mathbb{R}^{n'}$, and $\kappa \in \mathbb{R}$ is a constant. The flow conservation constraints are replaced by $A' x' = b'$, while the vertex-disjointness constraints and the subtour-elimination constraints are imposed only on the vertices and arcs that remain in $G$. We next show that the R-BQP is equivalent to the BQP.

While fixing arcs to zero trivially eliminates variables, fixing arcs to one alters the flow conservation constraints. Since $k\mathcal{G}$ is a disjoint union of $\mathcal{G}_i$, each fixed-one arc belongs exclusively to a single component $\mathcal{G}_i$. Therefore, we first restrict our analysis of fixed-one arcs to a directed graph $\mathcal{G}=(\mathcal{N},\mathcal{E})$ with a single source--target pair $(s, t)$. The following lemmas establish that these arcs must be used in the same order in every path from $s$ to $t$ and naturally partition the vertex set.

\begin{lemma}\label{orderE}
Let $\mathcal{G} = (\mathcal{N},\mathcal{E})$ be a directed graph, and let $s, t \in \mathcal{N}$ be the source and target vertices, respectively. If a set of arcs $\mathcal{E}^* \subseteq \mathcal{E}$ must be included in every feasible path from $s$ to $t$, then the arcs in $\mathcal{E}^*$ must  appear in the same order in every $s$--$t$ path.
\end{lemma}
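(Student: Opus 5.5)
We argue by contradiction, using path surgery. Suppose there are two $s$--$t$ paths $P$ and $P'$ and two arcs $e_1=(a_1,b_1)$ and $e_2=(a_2,b_2)$ in $\mathcal{E}^*$ such that $e_1$ precedes $e_2$ on $P$ but $e_2$ precedes $e_1$ on $P'$. Since every arc of $\mathcal{E}^*$ lies on every $s$--$t$ path, and the vertices of a path are distinct, each arc occurs exactly once on each path. Hence "order" is well defined as a linear order on $\mathcal{E}^*$ induced by position along the path. It therefore suffices to show that this order is the same for any two paths; doing so pairwise for $e_1,e_2$ gives the full statement.

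The key construction is to splice $P$ and $P'$ into a walk that avoids $e_1$ or $e_2$. Write $P = s \leadsto a_1 \to b_1 \leadsto a_2 \to b_2 \leadsto t$ and $P' = s \leadsto a_2 \to b_2 \leadsto a_1 \to b_1 \leadsto t$. Consider the walk $W$ formed by the prefix of $P$ from $s$ to $a_1$ followed by the suffix of $P'$ from $a_1$ to $t$. The suffix of $P'$ from $a_1$ begins with $e_1$ and comes after $e_2$ in $P'$, so it does not contain $e_2$. We also need the prefix of $P$ up to $a_1$ to avoid $e_2$. This holds because $e_2$ occurs once on $P$, after $e_1$, and the prefix ends at the tail $a_1$ of $e_1$. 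Therefore $W$ is an $s$--$t$ walk that does not use the arc $e_2$.

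Next we shortcut $W$ to a path. The standard procedure repeatedly deletes the closed subwalk between two occurrences of a repeated vertex. This yields an $s$--$t$ path whose arc set is a subset of the arcs of $W$, so the resulting path still avoids $e_2$. This contradicts the assumption that $e_2\in\mathcal{E}^*$ lies on every $s$--$t$ path. So no two arcs of $\mathcal{E}^*$ can appear in opposite orders on two paths, and the order is the same on all $s$--$t$ paths.

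The main point requiring care is verifying that neither piece of $W$ contains $e_2$. This rests on each arc of $\mathcal{E}^*$ appearing exactly once on a path. It also rests on choosing the splice vertex $a_1$ as the tail of $e_1$, so that the $P$-prefix stops before $e_1$ (and hence before $e_2$). The shortcutting step is routine and only removes arcs. We should also handle degenerate cases, such as $a_1=s$ or arcs sharing endpoints, for example $b_1=a_2$. In these cases the same splice works, because the argument uses only the positions of arcs along paths.
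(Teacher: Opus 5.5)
Your proof is correct and uses essentially the same splice-at-a-tail contradiction as the paper. The paper splices at the tail of $e_2$ to obtain a route avoiding $e_1$; you splice symmetrically at the tail of $e_1$ to avoid $e_2$. Your explicit shortcutting of the walk to a path is cleaner than the paper's claim that the concatenation is already simple, a claim that itself implicitly needs a walk-to-path reduction.
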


\begin{proof}
Suppose for contradiction that two arcs $e_1, e_2 \in \mathcal{E}^*$ can appear in different orders. Let $P_1$ be a feasible $s$--$t$ path visiting $e_1$ before $e_2$, and let $P_2$ be a feasible $s$--$t$ path visiting $e_2$ before $e_1$. Let $u$ be the tail vertex of $e_2$.

We can form a new walk $P$ by combining the segment of $P_2$ from $s$ up to $u$ with the segment of $P_1$ from $u$ to $t$. We first show that $P$ is a simple path. Suppose there exists a vertex $w \neq u$ shared by the segment $P_2[s,u]$ and the segment $P_1[u,t]$. If such a vertex $w$ existed, we could construct an $s$--$t$ path passing through $w$. This path would bypass $e_1$ and $e_2$ entirely, contradicting the premise that $e_1, e_2 \in \mathcal{E}^*$. Thus, $P$ must be a simple path.

Then, because $e_1$ is located after $e_2$ in $P_2$ and before $e_2$ in $P_1$, this concatenated path $P$ entirely avoids $e_1$. This contradicts the premise that $e_1$ must be used in every feasible path from $s$ to $t$. Therefore, all arcs in $\mathcal{E}^*$ must follow the same order in every feasible $s$--$t$ path.
\end{proof}

\begin{lemma} \label{orderV}
Let $\mathcal{E}^* = \{e_1, \dots, e_g\}$ be the set of arcs that appear in every $s$--$t$ path in $\mathcal{G}$, indexed by the order in which they appear along every such path, where $e_i=(a_i,b_i)$. These arcs partition the set of all vertices that lie on at least one $s$--$t$ path into $g+1$ pairwise disjoint subsets $V_0, V_1, \dots, V_g$. Furthermore, in the reduced graph $\mathcal{G}' = (\mathcal{N}, \mathcal{E} \setminus \mathcal{E}^*)$, there is no arc $(u, v)$ such that $u \in V_i$ and $v \in V_j$ for any $i < j$.
\end{lemma}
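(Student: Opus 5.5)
I would first make the partition explicit. For a vertex $w$ lying on some $s$--$t$ path $P$, define its index $\iota(w)$ as the number of arcs of $\mathcal{E}^*$ that precede $w$ along $P$. Equivalently, $\iota(w)=j$ means $w$ lies on the segment of $P$ strictly after $e_j$ (i.e.\ from $b_j$ on) and up to $a_{j+1}$, with the conventions $b_0=s$ and $a_{g+1}=t$. Then set $V_j=\{w:\iota(w)=j\}$.

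The main work is showing that $\iota$ is well defined, i.e.\ independent of the chosen path; I expect this to be the main obstacle. Suppose $w$ lies on $P_1$ with index $i$ and on $P_2$ with index $j>i$. I would mimic the splicing argument in the proof of Lemma~\ref{orderE} and take the walk $W = P_1[s,w]\,P_2[w,t]$. This walk uses $e_1,\dots,e_i$ from $P_1$ and $e_{j+1},\dots,e_g$ from $P_2$. It contains an $s$--$t$ path $P \subseteq W$, obtained by shortcutting repeated vertices. That path misses $e_{i+1}$ unless $e_{i+1}$ occurs in $W$; it does not occur in $P_1[s,w]$, and it does not occur in $P_2[w,t]$ since $e_{i+1}$ precedes $w$ on $P_2$ and $P_2$ is simple. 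This contradicts $e_{i+1}\in\mathcal{E}^*$.

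A cleaner way to organize the shortcutting, which avoids the delicate self-intersection issue in Lemma~\ref{orderE}, is to note that every arc of a shortest $s$--$t$ subpath of $W$ is an arc of $W$. So it suffices that $e_{i+1}\notin W$, and no simplicity of the concatenation is needed. Once $\iota$ is well defined, the sets $V_0,\dots,V_g$ are pairwise disjoint by construction, and they cover all vertices on $s$--$t$ paths. They are nonempty, since $b_j\in V_j$ and $s\in V_0$.

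For the arc statement, suppose $(u,v)\in\mathcal{E}\setminus\mathcal{E}^*$ with $u\in V_i$, $v\in V_j$, $i<j$. Take a path $P_1$ through $u$ and a path $P_2$ through $v$, and consider the walk $P_1[s,u]\,(u,v)\,P_2[v,t]$. By the index definitions, its arcs are $e_1,\dots,e_i$, the arc $(u,v)\neq e_{i+1}$, and $e_{j+1},\dots,e_g$. Hence $e_{i+1}$ is absent from the walk. Extracting an $s$--$t$ path from the walk as above again contradicts $e_{i+1}\in\mathcal{E}^*$. Here the ordering from Lemma~\ref{orderE} is used to guarantee that the arcs preceding $v$ on $P_2$ are exactly $e_1,\dots,e_j$, so $e_{i+1}$ lies before $v$ and is not in $P_2[v,t]$.
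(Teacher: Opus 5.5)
Your proof is correct and uses the same splicing idea as the paper: glue an initial segment of one $s$--$t$ path to a final segment of another, either at a shared vertex or across a forward arc $(u,v)$, to get an $s$--$t$ walk that misses a mandatory arc $e_{i+1}$, which is a contradiction. The differences are minor and in your favor: the paper defines $V_j$ as the vertices lying on some path in $\mathcal{G}'$ from $b_j$ to $a_{j+1}$ (with $b_0=s$, $a_{g+1}=t$), which tacitly requires that such vertices lie on $s$--$t$ paths, whereas your position-based index makes the covered set exactly the vertices on $s$--$t$ paths; and you explicitly extract a path from the spliced walk, a step the paper leaves implicit.
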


\begin{proof}
The arcs in $\mathcal{E}^*$ decompose any $s$--$t$ path into exactly $g+1$ independent subpaths: from $s$ to $a_1$, from $b_i$ to $a_{i+1}$ for $1 \le i \le g-1$, and from $b_g$ to $t$. By definition, none of these subpaths contain any arcs from $\mathcal{E}^*$.

Let $V_0$ contain all vertices $v$ such that there is a path in $\mathcal{G}'$ from $s$ to $a_1$ containing $v$. For $1 \le i \le g-1$, let $V_i$ contain all vertices $v$ such that there is a path in $\mathcal{G}'$ from $b_i$ to $a_{i+1}$ containing $v$. Finally, let $V_g$ contain all vertices $v$ such that there is a path in $\mathcal{G}'$ from $b_g$ to $t$ containing $v$. For each $V_i$, let $o_i$ and $d_i$ be its supply and demand vertices: $(o_0, d_0) = (s, a_1)$, $(o_g, d_g) = (b_g, t)$, and $(o_i, d_i) = (b_i, a_{i+1})$ for $1 \le i < g$.

Suppose $v \in V_i \cap V_j$ for $i < j$. By definition, paths exist in $\mathcal{G}'$ from $o_i$ to $v$ and from $v$ to $d_j$. Concatenating a valid $s$--$o_i$ path and a valid $d_j$--$t$ path creates an $s$--$t$ path in $\mathcal{G}$. Because $i < j$, jumping directly from $o_i$ to $d_j$ via $v$ completely bypasses the arcs $e_{i+1}, \dots, e_j$. This contradicts the premise that $e_j$ must appear in every $s$--$t$ path. Thus, these subsets are strictly disjoint.

Suppose an arc $(u, v) \in \mathcal{G}'$ exists from $V_i$ to $V_j$ for $i < j$. Paths exist in $\mathcal{G}'$ from $o_i$ to $u$ and from $v$ to $d_j$. Combined with $(u, v)$, they form a path from $o_i$ to $d_j$ that uses no arcs from $\mathcal{E}^*$. Prepending an $s$--$o_i$ path and appending a $d_j$--$t$ path again forms an $s$--$t$ path in $\mathcal{G}$ that bypasses $e_j$, resulting in a contradiction. Therefore, no such arc exists in $\mathcal{G}'$.
\end{proof}
\begin{theorem}\label{singlepair}
Let $A'$ be the incidence matrix of $\mathcal{G}'$, and $b'$ be the updated supply--demand vector with supplies of 1  at $\{s, b_1, \dots, b_g\}$ and demands of -1 at $\{a_1, \dots, a_g, t\}$. Any acyclic binary solution $x'$ satisfying $A'x' = b'$ forms, together with $\mathcal{E}^*$, a valid $s$--$t$ path in $\mathcal{G}$.
\end{theorem}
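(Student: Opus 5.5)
The plan is to decompose the flow $x'$ into paths and then use the partition of Lemma~\ref{orderV} to show the pieces chain together. Since $x'$ is binary and acyclic with $A'x' = b'$, its support is a subgraph $H$ of $\mathcal{G}'$. In $H$, every vertex other than $\{s,b_1,\dots,b_g\}$ has out-degree minus in-degree equal to $0$, every vertex of $\{s,b_1,\dots,b_g\}$ has excess $+1$, and every vertex of $\{a_1,\dots,a_g,t\}$ has excess $-1$. The standard flow decomposition, made exact by acyclicity, writes $H$ as the arc-disjoint union of exactly $g+1$ directed paths, each starting at a supply vertex and ending at a demand vertex, with no leftover cycles. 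I will also implicitly use the vertex-capacity constraints \eqref{BQP-out}--\eqref{BQP-in} restricted to $\mathcal{G}'$. With them every vertex has in- and out-degree at most one in $H$, so these paths are vertex-disjoint and simple.

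The key step is to show that the path starting at $o_i$ ends at $d_i$ for each $i$, with $(o_i,d_i)$ as in Lemma~\ref{orderV}. Here I use the second part of Lemma~\ref{orderV}: in $\mathcal{G}'$ no arc goes from $V_i$ to $V_j$ with $i<j$. I restrict attention to vertices lying on some $s$--$t$ path; vertices outside $\bigcup V_i$ should be excluded or handled by preprocessing, since they carry no flow in a useful solution. Then a path in $\mathcal{G}'$ starting at $o_i \in V_i$ can only move into classes $V_j$ with $j \le i$. Hence the path from $o_i$ ends at some $d_j$ with $j\le i$. This defines a map from supplies to demands that is a bijection, because the paths are vertex-disjoint and every demand is matched. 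A bijection $\sigma$ on $\{0,\dots,g\}$ with $\sigma(i)\le i$ for all $i$ must be the identity, by induction starting from $i=0$. Therefore each path $P_i$ runs from $o_i$ to $d_i$.

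Finally I concatenate $P_0, e_1, P_1, e_2, \dots, e_g, P_g$, which is an $s$--$t$ walk in $\mathcal{G}$ using all of $\mathcal{E}^*$. It is a simple path because the $P_i$ are pairwise vertex-disjoint. The fixed arcs $e_i=(a_i,b_i)$ only join endpoints that already lie on adjacent pieces, so they create no repetition.

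The main obstacle is the second step, which relies on every vertex of $H$ lying in some $V_j$. A path in $\mathcal{G}'$ could wander through vertices on no $s$--$t$ path, or leave $V_i$ into a vertex that reaches only $d_j$ via non-class vertices. To close this gap, I would first argue that any vertex reachable in $\mathcal{G}'$ from $o_i$ and reaching some demand $d_j$ lies on an $s$--$t$ path of $\mathcal{G}$. I would do this by prefixing an $s$--$o_i$ route and suffixing a $d_j$--$t$ route, and invoking the same bypass argument as in Lemma~\ref{orderV}; this also forces $j\le i$ directly. This argument handles the ordering without classifying intermediate vertices: a path from $o_i$ to $d_j$ with $j>i$ in $\mathcal{G}'$ would yield an $s$--$t$ path missing $e_j$. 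Simplicity of the combined walk needs care here too, so I would reuse the vertex-disjointness of the decomposed paths.
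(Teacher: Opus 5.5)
Your skeleton is the paper's: split the support of $x'$ into $g+1$ pieces, use Lemma~\ref{orderV} to show that the piece leaving $o_i$ ends at $d_i$, and splice the pieces with $e_1,\dots,e_g$. Your matching step is more careful than the paper's. The bijection with $\sigma(i)\le i$ makes explicit the triangular argument the paper leaves implicit. Your direct bypass argument also covers flow paths that leave $\bigcup_j V_j$, which the paper's ``no arc from $V_i$ to $V_j$'' inference does not literally handle. On the care you flag there: an $o_i$--$d_j$ path in $\mathcal{G}'$ with $j>i$, extended by a prefix and a suffix of an $s$--$t$ path, is only a walk. Shortcutting it to a path only deletes arcs, so the result still misses $e_{i+1},\dots,e_j$, which is the contradiction you want.

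The genuine gap is that you import the vertex-capacity constraints \eqref{BQP-out}--\eqref{BQP-in}, which are not hypotheses here. The theorem assumes only that $x'$ is binary, acyclic and satisfies $A'x'=b'$. In your argument, the vertex-disjointness of the pieces rests on those degree bounds, and so does the simplicity of the spliced $s$--$t$ walk. So as written you prove a weaker statement; the constraints do hold where the theorem is applied in Theorem~\ref{theo}, but the statement does not assume them.

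The missing idea is that disjointness comes for free from the partition in Lemma~\ref{orderV}:
\begin{itemize}
\item Each decomposed path is simple because the support is acyclic.
\item The supply-to-demand map is a bijection from excess counts alone, since the decomposition starts exactly one path at each vertex of excess $+1$ and ends exactly one at each vertex of excess $-1$. No disjointness is needed for this.
\item Once you know $P_i$ is a simple $o_i$--$d_i$ path in $\mathcal{G}'$, every vertex of $P_i$ lies in $V_i$ by the definition of $V_i$.
\item The first part of Lemma~\ref{orderV} then makes $P_0,\dots,P_g$ pairwise vertex-disjoint. Since $a_i=d_{i-1}\in V_{i-1}$ and $b_i=o_i\in V_i$, the splice $P_0,e_1,P_1,\dots,e_g,P_g$ is a simple $s$--$t$ path.
\end{itemize}
This is how the paper concludes (``each $P_i$ is strictly contained within $V_i$''). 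With it, your proof no longer needs the capacity constraints.
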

\begin{proof}
According to Lemma \ref{orderV}, in the reduced graph $\mathcal{G}'$, no arc exists from $V_i$ to $V_j$ for $i < j$. Hence, the supply originating in any subset $V_i$ cannot reach the demand in any subset $V_j$ where $i < j$. Therefore, to ensure that all flow requirements are met, the supply within each subset $V_i$ must be strictly satisfied by the demand within that same $V_i$. Specifically, the supply $+1$ at $s$ must reach $a_1$ in $V_0$, each supply $+1$ at $b_k$ must reach $a_{k+1}$ in $V_k$ for $1 \le k < g$, and the supply $+1$ at $b_g$ must reach $t$ in $V_g$.

Consequently, the acyclic binary solution $x'$ necessarily forms $g+1$ subpaths $P_0, P_1, \dots, P_g$, where each $P_i$ is strictly contained within $V_i$, connecting $o_i$ to $d_i$. Since each $e_i = (a_i, b_i)$ links the destination $a_i$ of subpath $P_{i-1}$ to the origin $b_i$ of subpath $P_i$, the union $P_0 \cup {e_1} \cup P_1 \cup \dots \cup {e_g} \cup P_g$ constitutes an $s$--$t$ path in the original graph $\mathcal{G}$. Thus, the non-zero entries of $x'$ combined with the set $\mathcal{E}^*$ yield a valid $s$--$t$ path.
\end{proof}

Since $k\mathcal{G} = \bigsqcup_{i=1}^k \mathcal{G}_i$ is a disjoint union, any vector $x$ feasible to \eqref{eq:BQP-form} and the fixed-one arc set $E^1$ naturally partition into $k$ components $x^i$ and $E^1_i$ for each $\mathcal{G}_i$.  Similarly, the parameters $A$ and $b$ can be block-partitioned as $A_i$ and $b_i$, while the parameters $A'$ and $b'$ can be block-partitioned as $A'_i$ and $b'_i$.  
\begin{theorem} \label{theo}
Let $x$ be a binary vector on $k\mathcal{G}$ defined by $x_E = x'$, $x_{E^1} = \mathbf{1}$, and $x_{E^0} = \mathbf{0}$. Then, $x'$ is feasible for the R-BQP if and only if $x$ is feasible for the BQP. Moreover, their respective objective values coincide.
\end{theorem}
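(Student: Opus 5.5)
We prove the two directions of feasibility separately and then check the objective. By construction $x_{E^0}=\mathbf{0}$ and $x_{E^1}=\mathbf{1}$, so all work concerns the components on $E$. Since $k\mathcal{G}=\bigsqcup_{i}\mathcal{G}_i$, both the flow constraints and the SECs decouple by copy $i$. The vertex-disjointness constraints \eqref{BQP-out}--\eqref{BQP-in} are the only coupling across copies, and they are handled through Corollary \ref{induced-zero}.

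\emph{BQP $\Rightarrow$ R-BQP.} Let $x$ be feasible for \eqref{eq:BQP-form} with the prescribed fixed values. We have
\[
A'x' = Ax - \sum_{e\in E^1} A_e - \sum_{e\in E^0} 0\cdot A_e = b - \sum_{e\in E^1}A_e = b',
\]
so the reduced flow constraint holds. The disjointness constraints and SECs of the R-BQP are restrictions of those of the BQP to the remaining vertices and arcs. Each one is obtained from its BQP counterpart by dropping terms that are zero, or by moving the fixed-one terms to the right-hand side, so each remains satisfied. Binarity is inherited. This direction is routine.

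\emph{R-BQP $\Rightarrow$ BQP.} This is the main step. Fix a copy $i$ and set $\mathcal{E}^*=E^1_i$. We want to apply Theorem \ref{singlepair} to $\mathcal{G}_i$ with the single pair $(s_i,t_i)$. This requires two facts.
\begin{enumerate}[label=(\roman*)]
\item The arcs of $E^1_i$ lie on every $s_i$--$t_i$ path in the sense required by Lemmas \ref{orderE} and \ref{orderV}. Every feasible BQP solution uses them, but Lemma \ref{orderE} is phrased for paths in the single graph. We therefore apply Lemmas \ref{orderE} and \ref{orderV} to the paths arising from feasible solutions, restricting $\mathcal{G}_i$ to vertices lying on such paths, which is where arcs not in $E^0$ live.
\item $x'^i$ is acyclic. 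This follows from the SECs imposed on $G$, which forbid cycles within $\mathcal{E}_i$.
\end{enumerate}
Theorem \ref{singlepair} then shows that $x'^i$ together with $E^1_i$ forms an $s_i$--$t_i$ path, so $A_ix^i=b_i$. Since this is a simple path, no cycle appears, and the SECs \eqref{BQP-sec} hold for $x^i$.

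It remains to check vertex-disjointness in the full $x$. At a vertex $v$ untouched by fixed-one arcs, the constraint is exactly the R-BQP one. If $v$ is an endpoint of a fixed-one arc $e\in E^1_i$, Corollary \ref{induced-zero} places every arc incident to $v^j$ for $j\neq i$ in $E^0$. It also places every other outgoing arc (at the tail) or incoming arc (at the head) of $v^i$ in $E^0$. Hence the sums in \eqref{BQP-out}--\eqref{BQP-in} at $v$ reduce to $x_e=1\le 1$. For this we must assume $E^0$ is closed under the induced fixings of Corollary \ref{induced-zero}, which the construction provides.

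The main obstacle is the technical point in (i): Theorem \ref{singlepair} requires $\mathcal{E}^*$ to lie on every $s$--$t$ path of the graph under consideration. Fixed-one arcs are defined relative to the coupled problem. I would handle this by applying the lemmas to $\mathcal{G}_i$ with $E^0_i$ removed. If needed, I would replace "every path" by "every path realizable in a feasible solution" and check that the proofs of Lemmas \ref{orderE}, \ref{orderV} and Theorem \ref{singlepair} only use paths of this kind.

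\emph{Objective.} Finally, the objectives agree by definition. Write $x=(x',\mathbf{1}_{E^1},\mathbf{0}_{E^0})$ and expand $x^\top\tilde Q x$ blockwise. This gives $(x')^\top Q'x'$ with $Q'=\tilde Q_{EE}$, plus $(c')^\top x'$ with $c'=2\tilde Q_{E,E^1}\mathbf{1}$, plus $\kappa=\mathbf{1}^\top\tilde Q_{E^1E^1}\mathbf{1}$. This is exactly how $(Q',c',\kappa)$ were constructed, so the two objective values coincide.
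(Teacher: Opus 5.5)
Your outline is the paper's own proof: restriction for BQP $\Rightarrow$ R-BQP, Theorem~\ref{singlepair} applied copy by copy with Corollary~\ref{induced-zero} for the converse, and the blockwise expansion of $x^\top\tilde Q x$ for the objective. The paper invokes Theorem~\ref{singlepair} on each $\mathcal{G}_i$ without comment. The point you isolate in (i) is nonetheless a genuine gap, and your proposed patch does not close it.

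An arc of $E^1_i$ lies on every $s_i$--$t_i$ path that can be completed by disjoint paths for the other pairs. It need not lie on every $s_i$--$t_i$ path that survives in $G$. The proofs of Lemmas~\ref{orderE} and~\ref{orderV} splice two paths into a third, and that third path need not be completable. So restricting to ``realizable'' paths breaks exactly those steps.

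Here is a concrete instance. Let $\mathcal{G}$ have arcs $(s_1,a)$, $(a,b)$, $(b,t_1)$, $(b,w)$, $(w,q)$, $(q,t_1)$, $(s_1,p)$, $(p,w)$, $(w,a)$, $(s_2,p)$, $(p,t_2)$, $(s_2,q)$, $(q,t_2)$.
\begin{itemize}
\item The $s_1$--$t_1$ paths are $s_1abt_1$, $s_1abwqt_1$, $s_1pwabt_1$ and $s_1pwqt_1$.
\item The $s_2$--$t_2$ paths are $s_2pt_2$, $s_2qt_2$ and $s_2pwqt_2$.
\item The only $s_1$--$t_1$ path avoiding $(a,b)$ is $s_1pwqt_1$, and it meets all three $s_2$--$t_2$ paths. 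Hence $(a^1,b^1)\in E^1$.
\item Yet $s_1pwabt_1$ is compatible with $s_2qt_2$, and $s_1abwqt_1$ is compatible with $s_2pt_2$. So no arc of $s_1pwqt_1$ lies in $E^0$. Because of $s_1abt_1$, none lies in $E^1$ either.
\end{itemize}
Thus $s_1pwqt_1$ survives in $G$ and bypasses $(a^1,b^1)$. It is precisely the splice at $w$ of two realizable paths. Moreover, $w$ lies both on an $s_1$--$a$ path and on a $b$--$t_1$ path in copy~$1$ of $G$, so the conclusion of Lemma~\ref{orderV} itself fails here.

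The theorem is still true, but you must argue globally rather than copy by copy. Let $x'$ be feasible for the R-BQP.
\begin{enumerate}
\item $Ax=b$, because $b'=b-\sum_{e\in E^1}A_e$.
\item The constraints \eqref{BQP-out}--\eqref{BQP-in} hold for $x$. At a vertex untouched by $E^1$ they are exactly the R-BQP constraints. At an endpoint $v$ of some $f\in E^1_i$, Corollary~\ref{induced-zero} zeroes all other copies of $v$ and the competing arcs at $v^i$. Flow conservation at $v^i$ then bounds the remaining in-degree (at the tail) or out-degree (at the head) by $1$, since no fixed-one arc leaves $t_i^i$ or enters $s_i^i$.
\item Note that only one of the two sums at $v$ equals $x_f$, not both as you wrote.
\item Hence each $x^i$ is a simple $s_i$--$t_i$ path $P_i$ plus vertex-disjoint cycles.
\item Let $\hat x$ be the indicator of $P_1\cup\dots\cup P_k$. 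It satisfies $\hat x\le x$, $A\hat x=b$ and \eqref{BQP-sec}, so it is feasible for the BQP. By the very definition of fixed-one arcs, $\hat x_{E^1}=\mathbf{1}$.
\item Every cycle of $x$ therefore avoids both $E^1$ and $E^0$. So it lies in $G$ and is excluded by the SECs of the R-BQP.
\end{enumerate}
Hence $x=\hat x$ is feasible for the BQP. This argument needs neither Lemmas~\ref{orderE} and~\ref{orderV} nor Theorem~\ref{singlepair}. Your other direction and your objective computation are fine.
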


\begin{proof}
 
Sufficiency ($\Rightarrow$): 
Suppose $x'$ is feasible for the R-BQP. The disjoint structure of the graph implies that $x'$ decomposes into $k$ sub-vectors $(x')^i$, each satisfying $A_i' (x')^i = b_i'$. By Theorem \ref{singlepair}, the union of the non-zero entries in $(x')^i$ and $E^1_i$ forms a valid $s_i$--$t_i$ path within each $\mathcal{G}_i$.

Let $\mathcal{N}_{E^1}\subset \mathcal{N}$ denote the base vertices occupied by the fixed-one arcs $E^1$. Then, Corollary \ref{induced-zero} establishes that all other arcs associated with $\mathcal{N}_{E^1}$ in $k\mathcal{G}$ are fixed to zero. Since these induced fixed-zero arcs are entirely contained within the removed set $E^0$, the reduced graph $G$ structurally enforces that $x'$ cannot visit any copies of $\mathcal{N}_{E^1}$.  Furthermore, the vector $x'$ satisfies the vertex-disjoint constraints on the remaining free vertices. Therefore, the vector $x$ forms $k$ valid, vertex-disjoint paths, proving its feasibility for the BQP on $k\mathcal{G}$.

Necessity ($\Leftarrow$): 
Suppose $x$ is feasible for the BQP. Since $G$ is a subgraph of $k\mathcal{G}$, let $x'$ be the restriction of $x$ to the arc set of $G$. As $x$ satisfies the global flow conservation $Ax = b$ and $E^1$ accounts for the flow at its endpoints, $x'$ necessarily satisfies the updated flow balance $A'x' = b'$. Moreover, since $x$ satisfies the vertex-disjointness constraints on $k\mathcal{G}$, its restriction $x'$ must also satisfy these constraints on $G$. Therefore, $x'$ is feasible for the R-BQP.

The objective function of the BQP is given by $f(x) = x^\top \tilde{Q} x$. By partitioning the arcs into $E, E^1$, and $E^0$, and substituting $x_E = x'$, $x_{E^1} = \mathbf{1}$, and $x_{E^0} = \mathbf{0}$, we have:
\begin{equation*}
f(x) = (x')^\top \tilde{Q}_{EE} x' + \sum_{e \in E} \sum_{f \in E^1} (\tilde{Q}_{ef} + \tilde{Q}_{fe}) x'_e + \sum_{e \in E^1} \sum_{f \in E^1} \tilde{Q}_{ef}.
\end{equation*}
By setting $Q' = \tilde{Q}_{EE}$, $c'_e = \sum_{f \in E^1} (\tilde{Q}_{ef} + \tilde{Q}_{fe})$, and $\kappa = \sum_{e \in E^1} \sum_{f \in E^1} \tilde{Q}_{ef}$, the resulting expression coincides with the objective function $f_R(x')$ of the R-BQP. Thus, $f(x) = f_R(x')$.
\end{proof}

\section{SDP Relaxation of the Subtour-Relaxed Model}\label{sec:sdp_relaxation}

The BQP is computationally intractable due to the combination of its nonconvex quadratic objective and the exponential number of SECs. To establish a tractable bounding framework, we consider a subtour-relaxed BQP by omitting the SECs from the BQP. While the omission of SECs may allow solutions with cycles, this relaxation still provides a valid lower bound for the original BQP. However, even this subtour-relaxed BQP remains NP-hard and difficult to solve directly as a nonconvex quadratic optimization problem over binary variables. Therefore, we develop SDP relaxations for the subtour-relaxed BQP to provide high-quality lower bounds and yield more robust performance for large-scale instances.

\subsection{SDP Relaxation and Facial Reduction}

To derive an SDP relaxation of the subtour-relaxed BQP, we define a matrix variable $Y \in \mathbb{S}_{+}^{n+1}$, indexed from $0$ to $n$. We impose linear constraints on $Y$ to ensure that the feasible region contains all rank-one matrices of the form:
$$
\begin{bmatrix} 1 \\ x \end{bmatrix} \begin{bmatrix} 1 & x^\top \end{bmatrix} = \begin{bmatrix} 1 & x^\top \\ x & xx^\top \end{bmatrix},
$$
where $x \in \{0,1\}^n$ is a feasible solution to the subtour-relaxed BQP. To capture the binary nature of $x$, we impose the arrow constraint:
$$
\text{arrow}(Y) = \begin{bmatrix}
    Y_{0,0}\\
    Y_{1,1} - Y_{0,1} \\
    \vdots\\
    Y_{n,n} - Y_{0,n}
\end{bmatrix} = e_0,
$$
where $e_{0}$ is the first unit vector in $\mathbb{R}^{n+1}$. Since $Y \in \mathbb{S}_{+}^{n+1}$, the condition $Y_{i,i} = Y_{0,i}$ models the requirement $x_e^2 = x_e$. Next, we enforce the flow conservation constraints $Ax = b$. By defining $M_A = [-b \mid A]^\top [-b \mid A]$, this requirement is linearized as the trace constraint $\langle M_A, Y \rangle = 0$. In addition, the quadratic objective $x^\top \tilde{Q} x$ is linearized as $\langle Q, Y \rangle$ by defining the block matrix
$$
Q = \begin{bmatrix} 0 & 0 \\ 0 & \tilde{Q} \end{bmatrix}.
$$
Combining these elements, we obtain the following SDP relaxation:
\begin{equation} \label{SDP-form}
\begin{aligned}
\min_{Y} \quad & \langle Q, Y \rangle \\
\text{s.t.} \quad & \langle M_A, Y \rangle = 0, \\
& Y_{p,q} = 0, \quad \forall (p,q) \in \mathcal{C}, \\
& \operatorname{arrow}(Y) = e_0, \\
& Y \succeq 0,
\end{aligned}
\end{equation}
where $\mathcal{C}$ is the conflict set defined in Section \ref{sec:bqp}.

To improve computational efficiency, we apply an analytic facial reduction step to \eqref{SDP-form}. The flow conservation constraints in \eqref{SDP-form} imply that the feasible region of \eqref{SDP-form} lies within the face $\mathcal{F} = \{Y \succeq 0 \mid \langle M_A, Y \rangle = 0\}.$ Since $M_A \succeq 0$, any $Y \in \mathcal{F}$ must satisfy $\text{range}(Y) \subseteq \ker(M_A)$. Let $r = \dim(\ker(M_A))$ and let $V \in \mathbb{R}^{(n+1) \times r}$ be a matrix whose columns form an orthonormal basis for $\ker(M_A)$. Any feasible $Y$ can therefore be parameterized as $Y = V R V^\top$ for $R \in \mathbb{S}_{+}^r$. Substituting this into \eqref{SDP-form} yields the facially reduced SDP:
\begin{equation} \label{sdpfr}
\begin{aligned}
\min_{Y, R} \quad & \langle Q, Y \rangle \\
\text{s.t.} \quad & Y = V R V^\top, \\
& Y_{p,q} = 0, \quad \forall (p,q) \in \mathcal{C}, \\
& \operatorname{arrow}(Y) = e_0, \\
& R \succeq 0,
\end{aligned}
\end{equation}

By solving for $R$ in the reduced space $\mathbb{S}_{+}^r$, we significantly decrease the number of variables and accelerate convergence, particularly in large-scale instances where $r \ll n+1$.

\subsection{Slater's Condition}
Even though $\mathbb{S}_{+}^r$ restricts the problem to a lower-dimensional face, the resulting system \eqref{sdpfr} may not satisfy Slater's condition. To certify the failure of Slater's condition, we consider the facial reduction auxiliary system associated with \eqref{sdpfr}:
\begin{equation}\label{aux_slater}
\begin{aligned}
\min_{X,z,y,W}\quad & \langle z,e_0\rangle \\
\text{s.t.}\quad
& X-\textnormal{arrow}^*(z)+\psi^*(y)=0  \\
& V^\top XV = W \\
& X\in\mathbb{S}^{n+1},\, z\in\mathbb{R}^{n+1},\, y\in\mathbb{R}^{|\mathcal{C}|},\, W\in\mathbb{S}_{+}^{r} 
\end{aligned}  
\end{equation}

The adjoint $\textnormal{arrow}^*:\mathbb{R}^{n+1}\to\mathbb{S}^{n+1}$ is defined by $\langle \textnormal{arrow}^*(z),Y\rangle=\langle z,\textnormal{arrow}(Y)\rangle$. Let $L_{ij}$ be the matrix with a $1$ at entry $(i,j)$ and zeros elsewhere. Then we have
\[
\textnormal{arrow}^*(z) = z_0 L_{00} + \sum_{i=1}^{n} z_i\Big(L_{ii}-\tfrac12 L_{0i}-\tfrac12 L_{i0}\Big).
\]
Define $\psi:\mathbb{S}^{n+1}\to\mathbb{R}^{|\mathcal{C}|}$ such that $(\psi(Y))_{(p,q)} = Y_{pq}$ for $(p,q)\in\mathcal{C}$, and its adjoint $\psi^*:\mathbb{R}^{|\mathcal{C}|}\to\mathbb{S}^{n+1}$ is defined by $(\psi^*(y))_{pq} = y_{pq}$ for $(p,q)\in\mathcal{C}$, and zero otherwise.
 
If the optimal value of \eqref{aux_slater} is non-positive (i.e., $\langle z, e_0 \rangle \le 0$) and there exists a non-zero solution $W \succeq 0$, then $W$ acts as the exposing vector certifying that Slater's condition fails for \eqref{sdpfr}. Conversely, if $W=0$ is the unique solution for all $\langle z, e_0 \rangle \le 0$, the problem is strictly feasible. While this system follows from the Theorem of the alternative, a concise derivation for this auxiliary system is provided in Appendix~\ref{app:aux_derivation} for completeness.

\begin{example} \label{ex:slater_failure}
Consider the directed planar graph $\mathcal{G}$ shown in Figure~\ref{fig:example_5_1}. We seek two vertex-disjoint paths: $1 \to 2$ and $3 \to 4$. To model this, we construct the disjoint union $\mathcal{G}^{1} \sqcup \mathcal{G}^{2}$ as shown in Figure~\ref{fig:disjoint_union}, where each component $\mathcal{G}^{i}$ is isomorphic to the base graph $\mathcal{G}$. 

\begin{figure}[ht]
    \centering
    \begin{tikzpicture}[>=Stealth, scale=0.7, every node/.style={circle,draw,minimum size=6mm, font=\scriptsize}]   
      \node (1) at (0,0.5) {1}; 
      \node (2) at (3,2.2) {2}; 
      \node (3) at (1.5,1.15) {3};
      \node (4) at (4.0,3.2) {4}; 
      \node (5) at (1.5,2.8) {5}; 
      \node (6) at (4.0,0.5) {6};
      \draw[->] (1)--(5); \draw[->] (1)--(6); \draw[->] (3)--(5); \draw[->] (3)--(6);
      \draw[->] (5)--(2); \draw[->] (6)--(2); \draw[->] (5)--(4); \draw[->] (6)--(4);
    \end{tikzpicture}
    \caption{Directed planar graph $\mathcal{G}$ with source--target pairs $(1, 2)$ and $(3, 4)$ as described in Example~\ref{ex:slater_failure}. This instance, consisting of six nodes and eight arcs, is used to illustrate the failure of Slater's condition in the SDP relaxation.}
    \label{fig:example_5_1}
\end{figure}
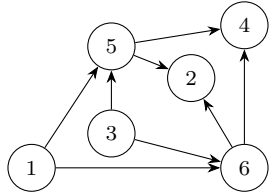

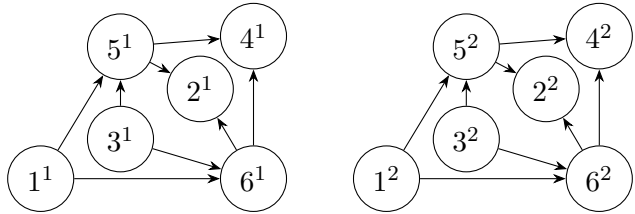
\begin{figure}[htbp]
    \centering
    \begin{tikzpicture}[>=Stealth, scale=0.7, every node/.style={circle,draw,minimum size=6mm, font=\small}]
      \node (1a) at (0,0.5) {$1^{1}$}; 
      \node (2a) at (3,2.2) {$2^{1}$}; 
      \node (3a) at (1.5,1.25) {$3^{1}$};
      \node (4a) at (4.0,3.2) {$4^{1}$}; 
      \node (5a) at (1.5,3.0) {$5^{1}$}; 
      \node (6a) at (4.0,0.5) {$6^{1}$};
    
      \draw[->] (1a)--(5a); \draw[->] (1a)--(6a); \draw[->] (3a)--(5a); \draw[->] (3a)--(6a);
      \draw[->] (5a)--(2a); \draw[->] (6a)--(2a); \draw[->] (5a)--(4a); \draw[->] (6a)--(4a);
    
      \begin{scope}[xshift=6.5cm]
        \node (1b) at (0,0.5) {$1^{2}$}; 
        \node (2b) at (3,2.2) {$2^{2}$}; 
        \node (3b) at (1.5,1.25) {$3^{2}$};
        \node (4b) at (4.0,3.2) {$4^{2}$}; 
        \node (5b) at (1.5,3.0) {$5^{2}$}; 
        \node (6b) at (4.0,0.5) {$6^{2}$};
    
        \draw[->] (1b)--(5b); \draw[->] (1b)--(6b); \draw[->] (3b)--(5b); \draw[->] (3b)--(6b);
        \draw[->] (5b)--(2b); \draw[->] (6b)--(2b); \draw[->] (5b)--(4b); \draw[->] (6b)--(4b);
      \end{scope}
    \end{tikzpicture}
    \caption{Disjoint union graph $\mathcal{G}^{1}\sqcup\mathcal{G}^{2}$, constructed from two isomorphic copies of the base graph $\mathcal{G}$ in Figure~\ref{fig:example_5_1}.} \label{fig:disjoint_union}
\end{figure}

In this instance, the original variable $Y$ resides in $\mathbb{S}_{+}^{17}$. After analytical facial reduction, the problem is restricted to matrices parameterized by $R \in \mathbb{S}_+^7$. To certify the failure of Slater's condition, we solve the auxiliary system \eqref{aux_slater}. The system yields an optimal value of $0$, identifying a non-zero certificate $W \in \mathbb{S}_{+}^{7}$. The existence of this exposing vector shows that the feasible region of \eqref{sdpfr} is contained within a proper face of $\mathbb{S}_{+}^{7}$. Consequently, there is no strictly feasible point $R \succ 0$ in the current space. (The detailed numerical entries of $W$ are provided in Appendix~\ref{app:W_matrix} for completeness.)
\end{example}

\subsection{Dimensionality Reduction and Bound Analysis}

Recall that when the base graph $\mathcal{G}$ is planar, the reduction  procedures detailed in Section \ref{sec:dedu} can be performed in polynomial time. We now consider the subtour-relaxed version of the R-BQP obtained by dropping the  SECs from R-BQP. To avoid ambiguity, we distinguish between model reduction (BQP versus R-BQP) and subtour relaxation (with or without SECs), which gives rise to four formulations: the BQP, the R-BQP, the subtour-relaxed BQP, and the subtour-relaxed R-BQP.

In this subsection, we show that the reduction procedure substantially strengthens the relaxation beyond merely reducing the problem size. Although the BQP and the R-BQP are equivalent, their subtour-relaxed counterparts are not necessarily equivalent. This distinction is formalized in the following observation.

\begin{proposition}
An arc $e$ fixed to zero in the BQP may take a non-zero value in the subtour-relaxed BQP if it participates in a negative cycle that satisfies flow conservation and vertex-disjointness constraints.
\end{proposition}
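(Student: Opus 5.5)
This is an existence statement, so the plan is to prove it by exhibiting how such an arc arises, backed by a concrete instance. The argument separates which constraints of \eqref{eq:BQP-form} are used to certify fixity and which are dropped in the subtour relaxation.

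First, we record what fixity to zero means. By the first lemma of Section~\ref{sec:dedu}, $e=(u^i,v^i)$ is fixed-zero exactly when no family of $k$ vertex-disjoint \emph{paths} uses $(u,v)$ in the $i$-th copy. The subtour-elimination constraints \eqref{BQP-sec} are what make every feasible $x$ decompose into exactly $k$ simple paths. Once they are dropped, any binary $x$ with $Ax=b$ decomposes, by standard flow decomposition on each $\mathcal{G}_i$, into one $s_i$--$t_i$ path plus a collection of disjoint directed cycles. Constraints \eqref{BQP-out}--\eqref{BQP-in} force these cycles to avoid the paths and each other, where vertex-disjointness is counted across all copies.

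Next, we make the general mechanism explicit. Take any BQP-feasible $\bar x$, consisting of $k$ disjoint paths. Let $C$ be a directed cycle in some copy $\mathcal{G}_j$ that passes through $e$ (so $j=i$), is vertex-disjoint from the base vertices used by $\bar x$, and satisfies $\sum_{f,g\in C}\tilde Q_{fg}+2\sum_{f\in C,\,g\in\bar x}\tilde Q_{fg}<0$. This condition says that $C$ is a negative cycle relative to $\bar x$. The vector $x=\bar x+\chi_C$ then satisfies \eqref{BQP-flow}, because a cycle adds zero net flow at every vertex. It satisfies \eqref{BQP-out}--\eqref{BQP-in} by disjointness, and it satisfies \eqref{BQP-bin}. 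Hence $x$ is feasible for the subtour-relaxed BQP with $x_e=1$ and has strictly smaller objective than $\bar x$. Whenever the optimal value over paths-only solutions is attained at such a $\bar x$, the subtour-relaxed optimum therefore uses $e$, which is the sense of ``may take a non-zero value''.

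Finally, we would give a small explicit instance to show that the hypotheses can all hold simultaneously. Take a graph with a single pair $s\to t$ joined by an arc $(s,t)$, plus a disjoint directed triangle $a\to b\to c\to a$ that is unreachable from $s$. Every triangle arc is then fixed-zero, since no $s$--$t$ path uses it, which can also be checked via the auxiliary $(k+1)$-VDP. Choose $\tilde Q$ with negative diagonal entries on the triangle arcs. The subtour-relaxed optimum then selects the triangle together with $(s,t)$.

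The main subtlety is making precise the quantifier ``may'' and the dependence on the objective. The feasibility part is routine; one must also check that the cycle is counted as vertex-disjoint from all copies of the path vertices. The example is what settles that the phenomenon actually occurs.
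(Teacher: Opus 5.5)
The paper gives no proof of this proposition. It presents it as an observation: dropping the SECs admits cycles, and a negative one lowers the objective. Your argument is therefore a rigorous version of the paper's own reasoning, and it is essentially correct.

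What you get right:
\begin{itemize}
\item The feasibility check for $\bar x+\chi_C$ is correct: zero net flow on the cycle, binarity, and degree counts taken across all copies.
\item The objective change $\chi_C^\top\tilde Q\chi_C+2\chi_C^\top\tilde Q\bar x$ is correct.
\item The triangle instance does exhibit a fixed-zero arc that is nonzero at the relaxed optimum. It even gives $v_f=-3<v_r=v^*=0$ for the next proposition.
\end{itemize}
For the instance you should pin down $\tilde Q$, e.g.\ diagonal with $-1$ on the triangle arcs and $0$ elsewhere. Negative diagonal entries alone do not suffice if positive couplings with $(s,t)$ or among the triangle arcs are allowed.

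One sentence is wrong and should be removed: that whenever the path-only optimum is attained at such a $\bar x$, the subtour-relaxed optimum uses $e$. A negative cycle $C$ through $e$ relative to an optimal $\bar x$ only shows that the relaxed optimal value is strictly below $v^*$. It does not show that the relaxed optimum has $x_e=1$. The relaxed optimum may use a different path together with other cycles. It may also use a more negative cycle that shares a base vertex with $C$.

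Concretely, add to your instance a second triangle $a\to d\to h\to a$ with diagonal costs $-2$. It cannot coexist with $a\to b\to c\to a$ because of the out-degree constraint at $a$. The relaxed optimum then selects the second triangle, with value $-6$, so $x_{(a,b)}=0$. This holds even though $a\to b\to c\to a$ is a negative cycle through $(a,b)$ relative to the unique BQP solution.

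The overclaim is not load-bearing. Your mechanism already proves the feasibility-plus-improvement reading of ``may take a non-zero value''. Your explicit example covers the reading at the level of optimal solutions.
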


Consequently, in the graph $k\mathcal{G}$, the subtour-relaxed BQP permits negative cycles containing arcs that are fixed to zero in the BQP,  which lowers the objective value. Since such arcs are absent in the reduced graph $G$, the subtour-relaxed R-BQP is restricted to a strictly tighter feasible region. This leads to the following relationship regarding the relaxation quality:

\begin{proposition} 
Let $v_{f}$ and $v_{r}$ be the optimal values of the subtour-relaxed BQP and the subtour-relaxed R-BQP, respectively. Let $v^*$ denote the optimal value of the original BQP (which, by Theorem ~\ref{theo}, is also the optimal value of the R-BQP). The following relationship holds: 
\begin{equation*} 
v_{f} \le v_{r} \le v^*. 
\end{equation*} 
\end{proposition}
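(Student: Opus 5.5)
The plan is to prove the two inequalities separately by comparing feasible sets: a relaxation of a minimization problem has optimal value at most that of the problem it relaxes. Throughout, the objectives are matched by the substitution from Theorem~\ref{theo}, under which the BQP objective on $k\mathcal{G}$ equals the R-BQP objective $f_R(x') = (x')^\top Q' x' + (c')^\top x' + \kappa$ whenever $x_E = x'$, $x_{E^1}=\mathbf{1}$, $x_{E^0}=\mathbf{0}$.

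\emph{Right inequality $v_r \le v^*$.} The subtour-relaxed R-BQP is obtained from the R-BQP by deleting the SEC constraints. Every feasible point of the R-BQP is therefore feasible for its subtour-relaxed version, with the same objective. Hence $v_r$ is at most the optimal value of the R-BQP, which equals $v^*$ by Theorem~\ref{theo}. If the R-BQP is infeasible, then $v^* = +\infty$ and the inequality is trivial.

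\emph{Left inequality $v_f \le v_r$.} Take any $x'$ feasible for the subtour-relaxed R-BQP. Lift it to $x$ on $k\mathcal{G}$ by setting $x_E = x'$, $x_{E^1} = \mathbf{1}$ and $x_{E^0} = \mathbf{0}$. By the objective identity in the proof of Theorem~\ref{theo}, which uses no SECs, $f(x) = f_R(x')$. It remains to show that $x$ is feasible for the subtour-relaxed BQP; taking the infimum over $x'$ then gives $v_f \le v_r$.

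\emph{Flow conservation.} Since $b' = b - \sum_{e\in E^1} A_e$ and the columns of $E^0$ are multiplied by zero, we get $Ax = A'x' + \sum_{e\in E^1}A_e = b' + \sum_{e\in E^1} A_e = b$. Here $A'$ is the restriction of $A$ to the columns in $E$, with rows of deleted vertices being zero.

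\emph{Vertex-disjointness.} Fix $v\in\mathcal{N}$ and consider the out-constraint \eqref{BQP-out}; the in-constraint \eqref{BQP-in} is handled the same way. There are two cases.
\begin{itemize}
\item If no fixed-one arc leaves a copy of $v$, then the arcs of $k\mathcal{G}$ in $\bigcup_i \delta_+(v^i)$ carrying value one all lie in $E$. The left-hand side equals the corresponding R-BQP constraint sum, which is at most $1$.
\item If some fixed-one arc $e\in\delta_+(v^i)$ exists, Corollary~\ref{induced-zero} says every other arc in $\bigcup_j \delta_+(v^j)$ is in $E^0$, so the sum is exactly $1$.
\end{itemize}
Binarity of $x$ is immediate.

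The main obstacle is this last step. It requires that $E^0$ really contains all the induced fixed-zero arcs of Corollary~\ref{induced-zero}, and that the R-BQP disjointness constraints are the restrictions of \eqref{BQP-out}--\eqref{BQP-in} to the remaining arcs. I would state this explicitly as part of the construction of $G$, as the proof of Theorem~\ref{theo} already does. This step is essentially the necessity direction of Theorem~\ref{theo}, with every appeal to Theorem~\ref{singlepair}, and hence to acyclicity, removed.
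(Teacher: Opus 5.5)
Your proof is correct and takes the same route as the paper, which only sketches it in the paragraph before the proposition. In both, $v_r \le v^*$ because dropping the SECs relaxes the R-BQP, whose optimum is $v^*$ by Theorem~\ref{theo}; and $v_f \le v_r$ because the lift $x_E = x'$, $x_{E^1} = \mathbf{1}$, $x_{E^0} = \mathbf{0}$ embeds the subtour-relaxed R-BQP feasible set into that of the subtour-relaxed BQP with equal objective values. Your explicit checks of flow conservation and of the disjointness constraints via Corollary~\ref{induced-zero} supply exactly what the paper leaves implicit; note only that the direction of Theorem~\ref{theo} you are re-using is the one the paper labels ``sufficiency'' (the one that invokes Theorem~\ref{singlepair}), not necessity.
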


We now consider the SDP relaxation of the subtour-relaxed R-BQP, which is constructed following the same lifting procedure as in Section \ref{sec:sdp_relaxation}. Here, the augmented objective matrix is formed by lifting the reduced parameters $Q'$, $c'$, and $\kappa$ into the space $\mathbb{S}^{n'+1}$. By constructing the lifted matrix $M_{A'}$ from the incidence matrix $A'$ and supply--demand  vector $b'$ of the reduced graph $G$, 
we obtain its facially reduced SDP relaxation by applying the same facial reduction procedure. Let $r' = \dim(\ker(M_{A'}))$. As illustrated in Example ~\ref{ex:slater_failure_red}, we can obtain $r' < r$. Thus, the resulting facially reduced SDP operates in a lower-dimensional space $\mathbb{S}^{r'}_+$.

\subsection{Slater's condition on the Reduced Graph}
 
However, Slater's condition may also fail for the resulting facially reduced SDP of the subtour-relaxed R-BQP.  
\begin{example}  
\label{ex:slater_failure_red}
We apply the reduction procedure to the disjoint union $\mathcal{G}^{1} \sqcup \mathcal{G}^{2}$ in Figure~\ref{fig:disjoint_union} to eliminate all fixed arcs. For the path in $\mathcal{G}^1$, the following arcs are identified as fixed-zero and removed:
\begin{equation*}
    \{(3^1,5^1), (3^1,6^1), (5^1,4^1), (6^1,4^1)\};
\end{equation*}
Similarly, for the path in $\mathcal{G}^2$, we remove these fixed-zero arcs:
\begin{equation*}
    \{(1^2,5^2), (1^2,6^2), (5^2,2^2), (6^2,2^2)\}.
\end{equation*}
There is no fixed-one arc, so the resulting reduced graph $G$ is shown in Figure~\ref{fig:reduced_graph}. 

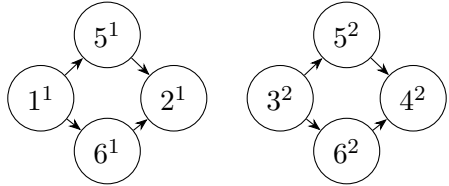
\begin{figure}[htbp]
    \centering
    \begin{tikzpicture}[>=Stealth, scale=0.7, vertex/.style={circle,draw,minimum size=8mm, font=\small}]
      \node[vertex] (11) at (0,0.6) {$1^1$};
      \node[vertex] (21) at (2.5,0.6) {$2^1$};
      \node[vertex] (51) at (1.25,1.8) {$5^1$};
      \node[vertex] (61) at (1.25,-0.4) {$6^1$};
      \draw[->] (11)--(51); \draw[->] (51)--(21);
      \draw[->] (11)--(61); \draw[->] (61)--(21);
    
      \node[vertex] (32) at (4.5,0.6) {$3^2$};
      \node[vertex] (42) at (7.0,0.6) {$4^2$};
      \node[vertex] (52) at (5.75,1.8) {$5^2$};
      \node[vertex] (62) at (5.75,-0.4) {$6^2$};
      \draw[->] (32)--(52); \draw[->] (52)--(42);
      \draw[->] (32)--(62); \draw[->] (62)--(42);
    \end{tikzpicture}
    \caption{Reduced graph $G$ after removing fixed arcs in the disjoint union graph in Figure~\ref{fig:disjoint_union}.}
    \label{fig:reduced_graph}
\end{figure}

Consequently, the facially reduced SDP relaxation for the subtour-relaxed R-BQP on $G$ is restricted to a face isomorphic to $\mathbb{S}_+^3$. Solving the corresponding facial reduction auxiliary system yields an optimal value of $0$ and the following non-zero solution $W \in \mathbb{S}_{+}^{3}$:
\begin{equation*}
W = \begin{bmatrix}
0.5 & -0.5 & 0 \\
-0.5 & 0.5 & 0 \\
0 & 0 & 0
\end{bmatrix} \succeq 0.
\end{equation*}
The existence of this $W \neq 0$ provides a numerical certificate that the feasible region of the facially reduced SDP remains on the boundary of the positive semidefinite cone.
\end{example}

\section{The Alternating Direction Method of Multipliers}\label{sec:admm}
The failure of Slater's condition, together with the high memory requirements, makes interior-point methods numerically unstable and slow for large-scale SDPs. We therefore use ADMM to solve the facially reduced SDP relaxation. As a first-order method, ADMM is memory-efficient because it decomposes the relaxation into simpler, inexpensive subproblems.

To improve numerical stability, we additionally impose the bounds $0 \le Y \le 1$. The problem \eqref{sdpfr} is reformulated as:

\begin{equation} \label{sdpfr_box}
\begin{aligned}
\min_{Y, R} \quad & \langle Q, Y \rangle \\
\text{s.t.} \quad & Y = V R V^\top, \\
& Y_{p,q} = 0, \quad \forall (p,q) \in \mathcal{C}, \\
& \operatorname{arrow}(Y) = e_0, \\
& 0 \le Y \le 1, \\
& R \succeq 0,
\end{aligned}
\end{equation}
 
With the nonnegativity constraint $Y \geq 0$, the upper bound $Y \leq 1$ is redundant, but we retain it because it improves convergence in practice \cite{oliveira2018admm}.

\subsection{ADMM Algorithm and Subproblem Updates}
The augmented Lagrangian function of \eqref{sdpfr_box} corresponding to the linear constraint $Y =  VRV^\top$ is given by:
\begin{equation}\label{Lag-admm}
L(R,Y, Z) = \langle  Q, Y \rangle + \langle Z, Y-VRV^\top \rangle + \frac{\beta}{2} \| Y - VRV^\top \|^2_F,
\end{equation}
 where $R$ and $Y$ are primal variables, and $Z$ is the dual variable. $ \beta >0 $ is the penalty parameter, and $\|\cdot\|_F$ is the Frobenius norm. At each iteration $ \ell $, given the current $ (R^{\ell },Y^{\ell },Z^{\ell }) $, ADMM updates the variables by solving the following subproblems sequentially:
 \begin{subequations}\label{admm-updates}
\begin{align}
R^{\ell +1} &= \arg\min_{R \succeq 0} L(R, Y^{\ell }, Z^{\ell }), \label{eq:admm-R} \\[2pt]
Y^{\ell +1} &= \arg\min_{Y \in \mathcal{F}_Y} L(R^{\ell +1}, Y, Z^{\ell }), \label{eq:admm-Y} \\[2pt]
Z^{\ell +1} &= Z^{\ell } + \rho\, \cdot \beta\,\bigl(Y^{\ell +1} - V R^{\ell +1} V^{\top}\bigr). \label{eq:admm-Z}
\end{align}
\end{subequations}
where 
\begin{equation}\label{admm-feas}
   \mathcal{F}_Y=\Biggl\{\, Y\in\mathbb{S}^{n+1}\ \Bigg|\ 
\begin{aligned}
& Y_{p,q}=0,\quad \forall (p,q)\in \mathcal{C} \\  
&\operatorname{arrow}(Y)=e_0,\\
&0\le Y\le 1
\end{aligned}
\Biggr\}. 
\end{equation}
Here $\rho \in (0,\frac{1+\sqrt{5}}{2})$ is the step-size for updating the dual variable $Z$, see \cite{wen2010alternating}. 

From the augmented Lagrangian, the $R$-subproblem reduces to minimizing a squared Frobenius norm:
\[
\min_{R \succeq 0} \;\Bigl\|\, Y^{(\ell)} - V R V^\top + \tfrac{Z^{(\ell)}}{\beta}\,\Bigr\|_F^2.
\]
Given $V^\top V = I$, the solution is obtained by projecting onto the cone of positive semidefinite matrices
\[
R^{(\ell+1)} = \mathcal{P}_{\succeq 0}\left( V^\top \left( Y^{(\ell)} + \frac{Z^{(\ell)}}{\beta} \right) V \right).
\]
The $Y$-subproblem reduces to a projection onto the set $\mathcal{F}_Y$:
\[ Y^{(\ell+1)} = \arg\min_{Y \in \mathcal{F}_Y} \left\| Y - \left( V R^{(\ell + 1)} V^{\top} - \frac{Q + Z^{(\ell)}}{\beta} \right) \right\|_F^2. \]
Let $\hat{Y} = V R^{(\ell + 1)} V^{\top} - \frac{Q + Z^{(\ell)}}{\beta}$. The closed-form solution is:
\begin{equation*}
Y_{p,q}^{\ell +1} = 
\begin{cases} 
1, & \text{if } p=q=0, \\[6pt]
\min\left\{ 1, \max\left\{ 0, \frac{\hat{Y}_{p,p} + 2\hat{Y}_{0,p}}{3} \right\} \right\}, & \text{if } 0 < p=q \leq n \text{ or } 0 = p < q \leq n, \\[6pt]
\min\left\{ 1, \max\left\{ 0, \hat{Y}_{p,q} \right\} \right\}, & \text{if } 0 < p < q \leq n, \\[6pt]
0, & \forall (p,q) \in \mathcal{C}.
\end{cases}
\end{equation*}
The remaining entries are determined by symmetry, i.e., $Y_{q,p}^{\ell+1} = Y_{p,q}^{\ell+1}$.

\subsection{Bounds Computation}

The output of the ADMM algorithm is used to compute lower and upper bounds for \eqref{sdpfr_box}. The dual feasible set for \eqref{sdpfr_box} is $\mathcal{F}_Z := \{ Z \mid -V^\top Z V \succeq 0 \}$. For any $Z \in \mathcal{F}_Z$, a valid lower bound is given by:
\begin{equation*}
g(Z) = \min_{Y \in \mathcal{F}_Y} \langle Q + Z, Y \rangle.
\end{equation*}
Since the ADMM output $\tilde{Z}$ may not be dual-feasible, we project it as $Z^{\text{proj}} = \mathcal{P}_{\mathcal{F}_Z}(\tilde{Z})$ and compute a valid lower bound as $g(Z^{\text{proj}})$. 

To derive a valid upper bound, we use the ADMM output $\tilde{Y}$ to extract a vector $w \in \mathbb{R}^n$, where each entry $w_p = (\tilde{Y})_{pp}$ for $p=1, \dots, n$. This vector guides the solution of the following linear program:
\begin{equation*}
x^* = \arg\max \{ w^\top x : x \in \mathcal{X} \},
\end{equation*}
where $\mathcal{X}$ denotes the feasible set of the subtour-relaxed BQP. A valid upper bound for \eqref{sdpfr_box} is then computed as $(x^*)^\top \tilde{Q} x^*$.

To solve the subtour-relaxed BQP to optimality, we implement a B\&B framework using the SDP relaxation \eqref{sdpfr_box} as the node relaxation, with ADMM solving the node subproblems. At each node of the branching tree, ADMM computes upper and lower bounds for the current node and updates the best upper bound found so far. 

\section{Numerical Experiments}\label{sec:experiments}

For general directed graphs, we develop an SDP relaxation and an ADMM-based B\&B framework, referred to as SABB, for the subtour-relaxed BQP. For planar instances, after graph reduction, the same SABB framework is applied to the subtour-relaxed R-BQP obtained from the reduced formulation.

In our computational study, all benchmark instances are planar. Therefore, all reported computational results are for the subtour-relaxed R-BQP, solved using the same SABB framework applied to the reduced model data. We focus on planar instances because they are motivated by the physical topology of VLSI layouts, a key application of the Q-$k$-VDP, and because planarity allows us to leverage the polynomial-time reduction procedure developed in this paper. 

We first describe the benchmark instance generation procedure and evaluate the effectiveness of our reduction procedure. We then compare the performance of SABB against Gurobi on these instances.

All experiments were implemented in MATLAB R2025b on a machine with an AMD Ryzen 9 9950X 16-core processor (4.30 GHz) and 96 GB of RAM. We also used Gurobi 11.0.3 under its default settings, except for the time limit and output flag.

\subsection{Instance Generation and Reduction}

Given a fixed number of vertices $m_v$, we first generate an initial grid graph by randomly choosing the number of rows $m_{rows}$ and columns $m_{cols}$ such that $m_{rows} \times m_{cols} = m_v$. Each undirected edge is replaced by two anti-parallel arcs to form a directed graph. For each graph, we randomly select $k$ source--target pairs within the grid. 
 
To focus on non-trivial instances, we then apply a preprocessing procedure to the grid to obtain our actual problem instance, defined by the graph $\mathcal{G} = (\mathcal{N}, \mathcal{E})$ and the corresponding set of source--target pairs. While $\mathcal{G}$ remains planar, it is no longer a regular grid, as the preprocessing strictly enforces the following structural conditions:

\begin{enumerate}[label=(\roman*)]
    \item Sources have no incoming arcs, and targets have no outgoing arcs.
    \item Every source must have at least two outgoing arcs, and every target must have at least two incoming arcs. If a terminal has only one incident arc, the next step is forced. We therefore move the source forward (or the target backward) along this arc until a branching choice exists.
\end{enumerate}
Following this procedure, we generate 40 feasible instances for each of the 29 distinct configurations, where each configuration is characterized by a unique pair $(m_v, k)$.

Next, we construct the disjoint union graph $k\mathcal{G}$ for each instance, which contains a total of $k|\mathcal{E}|$ arcs. To identify and remove the arcs fixed to zero or one from $k\mathcal{G}$, we use Gurobi to solve the two auxiliary problems defined in Section \ref{sec:dedu}. Let $G = (V, E)$ be the resulting reduced graph, which comprises the remaining unfixed arcs and their incident vertices.

Table \ref{tab:combined_stats} summarizes the performance statistics for each configuration, including the average arc counts for both the union graph $k\mathcal{G}$ and the reduced graph $G$, as well as the average computational time required for this reduction process. These statistics reveal three clear trends:

\begin{enumerate}[label=(\roman*)]
    
    \item \textbf{Impact of $k$:} For a given initial size $m_v$, a larger $k$ naturally increases the likelihood of path conflicts. Because of this higher terminal density, we are able to eliminate a significantly larger proportion of arcs from $k\mathcal{G}$. Notably,
    for the instances with the largest $k$, we successfully eliminate over 95\% of the initial arcs.

     \item \textbf{Impact of $m_v$:} Conversely, for a fixed $k$, the reduction rate decreases as the initial grid size $m_v$ increases. This behavior is expected because instances derived from larger base grids provide greater routing flexibility. With terminals more sparsely distributed across the topology, paths are less likely to conflict, resulting in fewer fixed arcs.
 
    \item \textbf{Computational Efficiency:} Although the reduction time scales with the overall instance dimensions ($m_v$ and $k$), the reduction procedure remains highly efficient in practice. Even the most time-consuming instances are fully reduced in under one minute.
\end{enumerate}

To further evaluate the effectiveness of our reduction procedure, we compare Gurobi's computational performance on the subtour-relaxed BQP against its reduced counterpart, the subtour-relaxed R-BQP, using instances from the $(m_v, k) = (80, 10)$ and $(100, 13)$ configurations. Under a 600-second time limit per instance, Gurobi solved only $3$ and $7$ instances for the subtour-relaxed BQP, respectively. However, when solving the subtour-relaxed R-BQP, Gurobi successfully solved all $40$ instances for both configurations, requiring an average computation time of merely $22.4$ seconds and $2.34$ seconds, respectively.  These results show that the reduction step is inexpensive relative to the downstream optimization effort while dramatically improving solvability.

\subsection{Numerical Results}
We compare two distinct approaches for solving the subtour-relaxed R-BQP on the reduced graph $G$: our SABB method and Gurobi.

Our complete dataset consists of 1,160 instances. Across all reduced graphs $G$, the number of vertices $|V|$ ranges from 4 to 473, and the number of arcs $|E|$ ranges from 4 to 1,293. In addition, the sparsity of the reduced quadratic coefficient matrices ranges from 0.125 to 0.870. 

To systematically evaluate scalability, we partition the instances into bins based on their arc count, $|E|$.  We use $|E|$ as the primary measure for problem complexity because it directly corresponds to the number of binary decision variables. We randomly select 40 instances from each bin to form our test set. To ensure a fair comparison, both Gurobi and SABB were allocated a time limit of one hour per instance.\footnote{Note that for instances where the 3600-second time limit is reached, the recorded total computation time may slightly exceed 3600 seconds. This occurs because our SABB performs the termination check only after the completion of a node evaluation.} Table \ref{tab:optgap_binned} reports the following performance metrics: 
\begin{itemize}
    \item \textbf{Avg. Gap:} The average relative optimality gap\footnote{The relative optimality gap is calculated as $\frac{|f_{UB} - f_{LB}|}{\max(|f_{UB}|, 10^{-8})}$, where $f_{UB}$ and $f_{LB}$ denote the best upper and lower bounds obtained at termination, respectively.} at termination.
    \item \textbf{Solved to Opt.:} The number of instances solved to optimality within the time limit.
    \item \textbf{Avg. Time (s):} The average computation time in seconds.
\end{itemize}
 
\begin{table}[htbp]
\caption{Comparison of optimality gaps, solvability, and average runtimes by arc count $|E|$ for SABB and Gurobi.}
\label{tab:optgap_binned}
\begin{tabular*}{\textwidth}{@{\extracolsep\fill}l  cc c c c c}
\toprule
\textbf{Arc bin ($|E|$)}  & \multicolumn{2}{c}{\textbf{Avg. Gap}} & \multicolumn{2}{c}{\textbf{Solved to Opt.}} & \multicolumn{2}{c}{\textbf{Avg. Time (s)}}\\
\cmidrule(lr){2-3} \cmidrule(lr){4-5}  \cmidrule(lr){6-7}
&   SABB & Gurobi & SABB & Gurobi & SABB & Gurobi \\
\midrule
$[1,100)$   & 0.000 & 0.000 & 40 & 40 & 0.32 & 0.15 \\
$[100,200)$ & 0.000 & 0.005 & 40 & 39 & 13.99 & 159.56 \\
$[200,300)$ & 0.000 & 0.080 &  40 &  33 & 96.08 & 1153.26 \\
$[300,400)$ & 0.139 & 0.667 &  24 &   8 & 1794.54 & 3121.43 \\
$[400,500)$ & 0.067 & 0.739 &  29 &   2 & 1924.37 & 3502.20\\
$[500,600)$ & 0.603 & 1.244 &   11 &   4 & 2915.98 & 3272.80 \\
$[600,700)$ & 0.957 & 1.795 &   2 &   2 & 3495.50 & 3498.32  \\
$[700,800)$ & 0.783 & 1.963 &   0 &   0 & 3611.74 & 3600.25  \\
$\geq 800$  & 0.972 & 2.272 &   1 &   0 & 3653.39 & 3600.18 \\
\bottomrule
\end{tabular*}
\end{table}
Table \ref{tab:optgap_binned} summarizes the computational results. For small instances ($|E| < 100$), both solvers solve all instances to optimality, with Gurobi requiring slightly less time. However, as the problem size increases to the medium-sized bins ($100 \le |E| < 300$), SABB demonstrates superior scalability. It continues to solve all instances to optimality, whereas Gurobi frequently reaches the one-hour time limit. Furthermore, SABB requires significantly less computation time.

As the number of arcs increases ($300 \le |E| < 600$), Gurobi’s success rate drops sharply, while SABB remains highly effective. The performance difference is most evident in the $[400,500)$ bin, where SABB successfully solves 29 out of 40 instances to optimality, while Gurobi only solves 2 instances. For the most challenging large-scale instances ($|E| \ge 700$), Gurobi completely fails to solve any instance to optimality within the one-hour time limit. In contrast, SABB demonstrates superior robustness, still managing to solve one instance to optimality even in the $|E| \ge 800$ bin. 

In terms of solution quality, as the problem size increases, SABB consistently yields a significantly smaller average optimality gap compared to Gurobi across all test instances. On the largest instances ($|E| \ge 800$), Gurobi's average optimality gap deteriorates to $2.272$ due to the poor quality of its lower bounds. In contrast, our SABB maintains an average gap of 0.972 by providing significantly tighter lower bounds produced by the SDP relaxation, indicating that it remains effective at bounding the objective value even when exact solutions are computationally demanding. 

In summary, these computational results establish SABB as a highly robust and scalable approach for solving the subtour-relaxed R-BQP. By generating significantly tighter lower bounds, SABB consistently speeds up convergence to optimality and maintains superior solution quality across all instance sizes, effectively overcoming the severe performance degradation observed in the commercial solver Gurobi.

\thispagestyle{plain}
\printbibliography  
\appendix
\section{Detailed Reduction Statistics}
\begin{table}[htbp]
\centering
\footnotesize
\caption{Average arc counts before ($k|\mathcal{E}|$) and after the reduction procedure ($|E|$), together with the average computational time in seconds required per configuration.}
\label{tab:combined_stats}
\begin{tabular*}{0.8\textwidth}{@{\extracolsep\fill}lcccc}
\toprule
Pairs & Grid size & \multicolumn{1}{c}{Initial arcs} & \multicolumn{1}{c}{Remaining arcs} & \multicolumn{1}{c}{Time} \\
$k$ & $m_v$ & \multicolumn{1}{c}{$k|\mathcal{E}|$} & \multicolumn{1}{c}{$|E|$} & \multicolumn{1}{c}{(s)} \\
\midrule
2 & 20  & 100 & 55  & 0.05 \\
  & 40  & 242 & 193 & 0.30 \\
  & 60  & 389 & 351 & 2.73 \\
  & 80  & 538 & 521 & 3.72 \\
  & 100 & 690 & 674 & 1.08 \\
\midrule
3 & 20  & 133  & 35  & 0.09 \\
  & 40  & 342  & 168 & 0.38 \\
  & 60  & 563  & 406 & 2.37 \\
  & 80  & 785  & 713 & 11.50 \\
  & 100 & 1012 & 964 & 4.14 \\
\midrule
6 & 40  & 586  & 60  & 0.59 \\
  & 60  & 1009 & 180 & 1.66 \\
  & 80  & 1459 & 460 & 8.48 \\
  & 100 & 1906 & 857 & 58.04 \\
\midrule
7 & 40  & 641  & 44  & 0.65 \\
  & 60  & 1137 & 137 & 1.77 \\
  & 80  & 1656 & 329 & 5.64 \\
  & 100 & 2179 & 658 & 24.62 \\
\midrule
8 & 40  & 694  & 30  & 0.72 \\
  & 60  & 1251 & 104 & 1.90 \\
  & 80  & 1832 & 241 & 4.56 \\
  & 100 & 2438 & 492 & 16.23 \\
\midrule
10 & 60  & 1454 & 56 & 2.21 \\
   & 80  & 2181 & 146 & 4.83 \\
   & 100 & 2911 & 300 & 10.60 \\
\midrule
12 & 80  & 2472 & 92  & 5.39 \\
   & 100 & 3348 & 188 & 10.45 \\
\midrule
13 & 80  & 2602 & 87  & 5.72 \\
   & 100 & 3550 & 157 & 10.78 \\
\bottomrule
\end{tabular*}
\end{table}

\section{Derivation of the Facial Reduction Auxiliary System}
\label{app:aux_derivation}

To derive the auxiliary system used to obtain the exposing vector for the facially reduced SDP in Section~\ref{sec:sdp_relaxation}, we consider the Lagrangian of the feasibility problem for the facially reduced SDP. We seek to determine whether a strictly feasible $R \succ 0$ exists or if a dual certificate exists proving that all feasible $R$ are restricted to the boundary of the positive semidefinite cone. We define the primal feasibility problem as finding $R \in \mathbb{S}^{r}_+$ and $Y \in \mathbb{S}^{n+1}$ such that $Y = VRV^\top$, $\textnormal{arrow}(Y) = e_0$, and $\psi(Y) = 0$. 

To construct the dual, we introduce the dual variables $z \in \mathbb{R}^{n+1}$ and $y \in \mathbb{R}^{|\mathcal{C}|}$ for the affine constraints, $X \in \mathbb{S}^{n+1}$ to represent the lifted matrix $Y = VRV^\top$, and $W \in \mathbb{S}^{r}_+$ as the dual slack matrix associated with the constraint $R \succeq 0$. The Lagrangian is given by:
\begin{equation*}
\min_{X, z, y, W \succeq 0} \max_{Y, R} \mathcal{L}(X, z, y, W, Y, R),
\end{equation*}
where the Lagrangian $\mathcal{L}$ is defined as:
\begin{equation*}
\mathcal{L} = \langle X, Y - VRV^\top \rangle + \langle z, e_0 - \textnormal{arrow}(Y) \rangle + \langle y, \psi(Y) \rangle + \langle W, R \rangle.
\end{equation*}
Using the properties of the adjoint operators $\textnormal{arrow}^*$ and $\psi^*$, we rearrange the terms as follows:
\begin{equation*}
\min_{X, z, y, W \succeq 0} \max_{Y, R} \langle X - \textnormal{arrow}^*(z) + \psi^*(y), Y \rangle + \langle W - V^\top XV, R \rangle + \langle z, e_0 \rangle.
\end{equation*}
For the inner maximization over $Y$ and $R$ to be bounded, the coefficients must vanish, leading to the dual feasibility requirements:
\begin{equation*}
\begin{aligned}
X - \textnormal{arrow}^*(z) + \psi^*(y) &= 0, \\
V^\top XV - W &= 0.
\end{aligned}
\end{equation*}
Under these conditions, the dual problem reduces to minimizing the remaining term $\langle z, e_0 \rangle$. According to the Theorem of the alternative, if the primal satisfies Slater's condition, any exposing vector $W \succeq 0$ must satisfy $\langle z, e_0 \rangle \leq 0$. This yields the auxiliary system \eqref{aux_slater} used to certify the failure of Slater's condition numerically.

\section{Numerical Certificate for Example ~\ref{ex:slater_failure}}
\label{app:W_matrix}
An exposing vector $W \in \mathbb{S}_{+}^{7}$ identifying the failure of Slater's condition for Example~\ref{ex:slater_failure} is given by:
\begin{equation*}
\small
W = \begin{bmatrix}
0.2140 & 0 & 0 & 0 & 0 & 0 & 0 \\
0 & 0.2141 & 0 & 0 & 0 & 0 & 0 \\
0 & 0 & 0.0953 & 0.0262 & -0.0953 & -0.0262 & -0.0287 \\
0 & 0 & 0.0262 & 0.0722 & -0.0262 & -0.0722 & 0.0174 \\
0 & 0 & -0.0953 & -0.0262 & 0.0953 & 0.0262 & 0.0287 \\
0 & 0 & -0.0262 & -0.0722 & 0.0262 & 0.0722 & -0.0173 \\
0 & 0 & -0.0287 & 0.0174 & 0.0287 & -0.0173 & 0.2367
\end{bmatrix}.
\end{equation*}

\end{document}